\theoremstyle{plain}
\newtheorem{corollary}{Corollary}[section]
\newtheorem{lemma}[corollary]{Lemma}
\newtheorem{prop}[corollary]{Proposition}
\newtheorem{thm}[corollary]{Theorem}
\newfont{\sBlackboard}{msbm10 scaled 1200}
\newcommand{\mylabel}[1]{\label{#1}
    \ifx\undefined\stillediting
    \else \fbox{$#1$}\fi }
\newcommand{\BE}{\begin{equation}}
\newcommand{\EEQ}{\end{equation}}
\newcommand{\rfb}[1]{\mbox{\rm
        (\ref{#1})}\ifx\undefined\stillediting\else:\fbox{$#1$}\fi}
\newfont{\Blackboard}{msbm10 scaled 1200}
\newfont{\roma}{cmr10 scaled 1200}
\newcommand{\bb}{\begin{equation}}
\newcommand{\bbb}{\end{equation}}
\newcommand{\mm}    {{\hbox{\hskip 0.5pt}}}
\newcommand{\bluff} {{\hbox{\raise 15pt \hbox{\mm}}}}
\def\section{\@startsection {section}{1}{\z@}{-3.5ex plus -1ex minus
        -.2ex}{2.3ex plus .2ex}{\large\bf}}
\numberwithin{equation}{section}
\begin{document}
%%%%%%%%%%%%%%%%%%%%%%%%%%%%%%%%%%%%%%%%%%%%
\title{Existence of solutions for  nonlinear  Dirac equations in the Bopp-Podolsky electrodynamics}
\author {Hlel Missaoui\footnote{
hlel.missaoui@fsm.rnu.tn;\ hlelmissaoui55@gmail.com}\\
Mathematics Department, Faculty of Sciences, University of Monastir,\\ 5019 Monastir, Tunisia}
\maketitle
%\author{Hlel Missaoui and Hichem Ounaies}
%\author[H. Missaoui]{Hlel Missaoui}
%\author[H. Ounaies]{Hichem Ounaies}
%\address[H. Missaoui and H. Ounaies]{Mathematics Department, Faculty of Sciences, University of Monastir, 5019 Monastir, Tunisia}

%\email[A. Bahrouni]{Anouar.Bahrouni@fsm.rnu.tn; bahrounianouar@yahoo.fr}
%\email[H. Missaoui]{hlelmissaoui55@gmail.com}
%\email[H. Ounaies]{hichem.ounaies@fsm.rnu.tn}

%\maketitle
%\tableofcontents
\begin{abstract}
In this paper, we study the following nonlinear  Dirac-Bopp-Podolsky system 
\begin{equation*}
\left\lbrace
\begin{array}{rll}
  \displaystyle{ -i\sum_{k=1}^{3}\alpha_{k}\partial_{k}u+[V(x)+q]\beta u+wu-\phi u}&=f(x,u),  \ \ &\text{in}\ \mathbb{R}^3,  \\
   \ & \ & \ \\
   -\triangle\phi+a^2\triangle^2 \phi&=4\pi \vert u\vert^2,\ \ &  \text{in}\ \mathbb{R}^3, 
\end{array}
\right.
\end{equation*}
where  $a,q>0,w\in \mathbb{R}$, $V(x)$ is a potential function, and $f(x, u)$ is the interaction term (nonlinearity). First, we give a physical motivation for this new kind of system. Second, under suitable assumptions on $f$ and $V$, and by  means of minimax techniques involving Cerami sequences, we prove the existence of at least one pair of solutions $(u,\phi_u)$.
\end{abstract}

{\small \textbf{Keywords:} Dirac-Bopp-Podolsky systems, Nonlinear Dirac equations, Existence of solutions.} \\
%{\small \textbf{2010 Mathematics Subject Classification:} Primary: 35J60; Secondary: 35J91, 35S30, 46E35, 58E30.}
{\small \textbf{2010 Mathematics Subject Classification:} 35J50, 35J48, 35Q60}

%%%--- Introduction ---------------------------------------------------------------------------------------------------

%%%%%%%%%%%%%%%%%%%%%%%%%%%%%%%%%%%%%%%%%%%%%%%%%%%%%%%%%%%%%%%%%%%%%%%%%
\section{Introduction}
The Dirac equation, proposed by British physicist Paul Dirac in $1928$ (see \cite{PD}), is a relativistic wave equation that describes the behavior of particles with spin-$\frac{1}{2}$, such as electrons, in a relativistic quantum mechanical framework. The equation can be written as:
$$i\hbar\frac{\partial \psi}{\partial t}=ic\hbar \sum_{k=1}^{3}\alpha_k\partial_{k}\psi-mc^2\beta \psi,$$
where $\hbar$ is the reduced Planck constant, $\psi$ represents the wave function of the particle, $c$ is the speed of light, $m$ is the mass of the particle, and $\alpha_1,\alpha_2,\alpha_3$ and $\beta$ are the $4\times 4$ Pauli-Dirac matrices 
$$\beta=\begin{pmatrix}
I & 0\\
0 & -I
\end{pmatrix},\ \alpha_k=\begin{pmatrix}
    0 & \sigma_k\\
    \sigma_k & 0
\end{pmatrix},\ k=1,2,3,$$
with 
$$\sigma_1=\begin{pmatrix}
    0 & 1\\
    1 & 0
\end{pmatrix},\ \sigma_2=\begin{pmatrix}
    0 & -i \\
    i & 0
\end{pmatrix}, \ \text{and}\ \sigma_3=\begin{pmatrix}
    1 & 0\\
    0 & -1
\end{pmatrix}.$$
These matrices are constructed in a way that ensures that they are both Hermitian and satisfy the following anticommutation relations
$$\lbrace\alpha_k;\alpha_\ell\rbrace=2\delta_{k\ell},\ \lbrace\alpha_k;\beta\rbrace=\lbrace\alpha_k;\alpha_0\rbrace=\lbrace\beta;\alpha_0\rbrace=0,\ \text{and}\ \beta^2=1,$$
where $\lbrace\cdot;\cdot\rbrace$ denotes the anticommutator operation, $\delta_{k\ell}$ is the Kronecker delta function, $\alpha_0$ represents a $4\times 4$ matrix operator that acts on the wave function of a particle and corresponds to the energy of the particle, and the indices $k,\ell$ run from 1 to 3.

The Dirac equation combined special relativity and quantum mechanics and predicted the existence of antiparticles, which was later confirmed experimentally (see \cite{P2}). The equation was initially met with skepticism by many physicists, including Albert Einstein, who had doubts about its mathematical consistency. However, the Dirac equation was soon embraced as a major breakthrough in the development of quantum mechanics, and it paved the way for the development of quantum field theory. In addition to its theoretical significance, the Dirac equation has had a wide range of practical applications, particularly in condensed matter physics, where it has been used to describe the behavior of electrons in solids. It has also been used in high-energy particle physics, where it forms the basis of the standard model of particle physics, for more details see \cite{PD,P1,P2,P3,P4,P5}. These references should provide a good starting point for anyone interested in learning more about the Dirac equation and its applications in physics.\\

On the other hand, the Bopp-Podolsky ((BP) for short) theory (or electrodynamics) (see \cite{B1}), developed by Bopp \cite{bopp}, and independently by Podolsky \cite{podl} is a
second-order gauge theory for the electromagnetic field. It was introduced to solve the so-called "infinity problem" that appears in the classical Maxwell theory. In fact, by the well-known Poisson equation (or Gauss law), the electrostatic potential $\phi$ for a given charge distribution whose density is $\rho$ satisfies the equation
\begin{equation}\label{1.2}
    -\triangle \phi=\rho,\ \ \text{on}\  \mathbb{R}^3.
\end{equation}
If $\rho=4\pi\delta_{x_0}$, ($x_0\in \mathbb{R}^3$), then $\mathcal{G}(x-x_0)$, with $\displaystyle{\mathcal{G}(x):=\frac{1}{\vert x\vert}}$, is the fundamental solution of \eqref{1.2} and $$\displaystyle{\mathcal{E}_M}(\mathcal{G}):=\frac{1}{2}\int_{\mathbb{R}^3}\vert \nabla\mathcal{G} \vert^2dx=+\infty$$
its electrostatic energy. Thus, in the Bopp-Podolsky theory, the   equation \eqref{1.2} is replaced by
\begin{equation}\label{1.1}
-\triangle\phi+a^2\triangle^2 \phi=\rho,\ \ \text{on}\  \mathbb{R}^3.
\end{equation} Therefore, In this case, if $\rho=4\pi\delta_{x_0}$, ( $x_0\in \mathbb{R}^3$), we are able to know explicitly the solution of the  equation \eqref{1.1} and to see that its energy is finite or not. Fortunately, in \cite{jde}  P. d'Avenia and G. Siciliano proved that $\mathcal{K}(x-x_0)$ with, $\displaystyle{\mathcal{K}(x):=\frac{1-e^{-\frac{\vert x\vert}{a}}}{\vert x\vert}}$, is the fundamental solution of the equation
$$-\triangle\phi+a^2\triangle^2 \phi=4\pi\delta_{x_0},\ \ \text{on}\  \mathbb{R}^3.$$
The solution of the previous equation  has no singularity in $x_0$ since it satisfies 
$$\lim\limits_{x\rightarrow x_0}\mathcal{K}(x-x_0)=\frac{1}{a},$$
and its energy is
$$\mathcal{E}_{BP}(\mathcal{K}):=\frac{1}{2}\int_{\mathbb{R}^3}\vert \nabla \mathcal{K}\vert^2dx+\frac{a^2}{2}\int_{\mathbb{R}^3}\vert \triangle \mathcal{K}\vert^2dx<+\infty.$$
For more details about this subject see \cite[Section 2]{jde}. Moreover, the (BP) theory may be interpreted as an effective theory for short distances
(see \cite{B7}) and for large distances, it is experimentally indistinguishable from the Maxwell one. Thus, the Bopp-Podolsky parameter $a > 0$, which has dimension of the inverse of mass, can be interpreted as a cut-off distance or can be linked to an effective radius for the electron. For more physical details about the (BP) electrodynamics, see \cite{B1,B2,B3,B4,B5,B6}.\\ 

From a physical point of view, the relationship between the Dirac equation and the Bopp-Podolsky electrodynamics has been studied by several authors (see \cite{BP2} and their references). One approach is to modify the four-potential in the Dirac equation to include the (BP) potential term, which leads to a modified Dirac equation that includes the effects of the BP electrodynamics. This modified Dirac equation can then be used to study the behavior of particles in strong electromagnetic fields, such as those found in high-energy physics and astrophysics.
The modified Dirac equation in the (BP) theory has been used to investigate the effects of spontaneous emission, as well as other new physical phenomena that arise from the (BP) potential term. However, it is important to note that the relationship between the Dirac equation and the (BP) electrodynamics is still an area of active research, and there is ongoing work to better understand the nature of this relationship and its implications for our understanding of quantum mechanics and electromagnetism, for more details see \cite{BP1,BP2,BP3,BP4,BP5}.\\

Mathematically, there are many papers focused on the existence of  solutions for Dirac equations coupling to the electromagnetic field (Maxwell theory) under various hypotheses on the external field and nonlinearity, see \cite{Zh,DM1,DM2,DM3,DM4,DM5,DM6,DM7,DM8,DM9,DM10,DM11} and their references. For example, in \cite{Zh}, Jian Zhang, Wen Zhang, and Xianhua Tang studied the following  nonlinear Maxwell-Dirac system:
\begin{equation}\label{MD}
\left\lbrace
\begin{array}{rll}
  \displaystyle{ -i\sum_{k=1}^{3}\alpha_{k}\partial_{k}u+[V(x)+q]\beta u+wu-\phi u}&=f(x,u),  \ \ &\text{in}\ \mathbb{R}^3,  \\
   \ & \ & \ \\
   -\triangle\phi&=4\pi \vert u\vert^2,\ \ &  \text{in}\ \mathbb{R}^3, 
\end{array}
\tag{$\mathcal{MD}$}
\right.
\end{equation}
where $u:\mathbb{R}^3\rightarrow \mathbb{C}^4$, $\phi:\mathbb{R}^3\rightarrow \mathbb{R}$ , $q=\frac{mc}{\hbar}$, and  $w=\frac{\theta}{c\hbar}$, $\theta\in \mathbb{R}$. Precisely, under suitable assumptions on the potential function $V(x)$ and the nonlinear term $f(x,u)$, they proved the existence of infinitely
many solutions of the system \eqref{MD}.\\

On the other side, to the best of our knowledge, mathematically, the Bopp-Podolsky  theory has appeared recently in the paper of P. d'Avenia and G. Siciliano \cite{jde}. In this last reference, the authors coupled the Schr\"odinger equations with the (BP) electrodynamics. More precisely, they studied the following Schr\"odinger-Bopp-Podolsky system:
\begin{equation}\label{SBP}
\left\lbrace
\begin{array}{rll}
   -\triangle u+wu+s^2\phi u&=\vert u\vert^{p-2}u,  \ \ &\text{in}\ \mathbb{R}^3,  \\
   \ & \ & \ \\
   -\triangle\phi+a^2\triangle^2 \phi&=4\pi u^2,\ \ &  \text{in}\ \mathbb{R}^3, 
\end{array}
\tag{$\mathcal{SBP}$}
\right.
\end{equation}
where $u,\phi:\mathbb{R}^3\rightarrow \mathbb{R}$, $a>0$ is the Bopp-Podolsky parameter, and  $s\neq 0$. Moreover, in \cite{jde}, the authors proved existence and nonexistence results depending on the parameters $s,p$ and they showed
that in the radial case, the solutions funded tend to solutions of the
classical Schr\"odinger-Poisson system as $a\rightarrow 0$. After the pioneering work by P. d'Avenia and G. Siciliano \cite{jde}, system \eqref{SBP} began to attract the attention of many mathematicians; see for instance \cite{ZCC,GK,CT,LPT,YCL,AS,EH,HE,EH1,AG,CRT,MS,Z1,FS,LC,TY,LT1} for positive solutions and \cite{Z2,HWT,WCL,HB} for sign-changing
solutions.\\

Motivated by the physics and mathematics background of Dirac equations and Bopp-Podolsky theory, in this paper, we study the Dirac equations coupled with (BP) electrodynamics. Precisely, we study the existence of solutions for the following  Dirac-Bopp-Podolsky system

\begin{equation}\label{DBP}
\left\lbrace
\begin{array}{rll}
  \displaystyle{ -i\sum_{k=1}^{3}\alpha_{k}\partial_{k}u+[V(x)+q]\beta u+wu-\phi u}&=f(x,u),  \ \ &\text{in}\ \mathbb{R}^3,  \\
   \ & \ & \ \\
   -\triangle\phi+a^2\triangle^2 \phi&=4\pi \vert u\vert^2,\ \ &  \text{in}\ \mathbb{R}^3, 
\end{array}
\tag{$\mathcal{DBP}$}
\right.
\end{equation}
where $u:\mathbb{R}^3\rightarrow \mathbb{C}^4$, $\phi:\mathbb{R}^3\rightarrow \mathbb{R}$ , $q=\frac{mc}{\hbar}$, $w=\frac{\theta}{c\hbar}$, $\theta\in \mathbb{R}$,  $a>0$ is the Bopp-Podolsky parameter, $V(x)$ is a potential function, and $f(x, u)$ is the interaction term (nonlinearity).

For what concerns the nonlinearity reaction  term $f:\mathbb{R}^3\times \mathbb{C}^4\rightarrow \mathbb{R}$, we assume that $f$ is a measurable in the first variable $x\in \mathbb{R}^3$, continuous in the second variable $u\in \mathbb{C}^4$, and satisfies the following assumptions:
\begin{enumerate}
\item[$(f_1)$] $f(x, u) \in C(\mathbb{R}^3\times\mathbb{C}^4,\mathbb{R}_+)$, $f(x, u)$ is 1-periodic in $x_k$, $k = 1, 2, 3$ and $F(x,u)\geq 0$.
\item[$(f_2)$] $\displaystyle{\frac{F(x,u)}{ \vert u\vert^{2}}}\longrightarrow +\infty$, as $\vert u\vert\longrightarrow +\infty$ uniformly in $x \in \mathbb{R}^3$.
\item[$(f_3)$] $f(x,u)=o(\vert u\vert)$ as $\vert u\vert\longrightarrow 0$ uniformly in $x\in\mathbb{R}^3$.
\item[$(f_4)$]  $\displaystyle{\widetilde{F}(x,u)=\frac{1}{2}f(x,u)u-F(x,u)}>0$, for $\vert u\vert$ large and there exist  constants
$\sigma>\frac{3}{2},\ \widetilde{C}>0$ and $r_0>0$, such that
$$\vert f(x,u)\vert^{\sigma}\leq \widetilde{C} \vert u\vert^{\sigma}\widetilde{F}(x,u),\ \ \forall\ (x,u)\in \mathbb{R}^3\times\mathbb{C}^4,\ \  \vert u\vert \geq r_0.$$
\end{enumerate}
Before stating our main result, we need the following hypotheses on the potential function $V(x)$ and on the parameters $w,q$.
\begin{enumerate}
    \item[$(A_1)$] $w\in (-q,q)$.
\item[$(A_2)$]$V\in C^{1}(\mathbb{R}^3,\mathbb{R}_+)$, and $V(x)$ is 1-periodic in $x_k, k = 1, 2, 3$.
\end{enumerate}

Our main result is summarized in the following theorem:
\begin{thm}\label{thm1}
Suppose that the hypotheses $(A_1)-(A_2)$ and  $(f_1)-(f_4)$ hold. Then, system \eqref{DBP} admits at least one pair of solutions $(v,\phi_v)$. 
\end{thm}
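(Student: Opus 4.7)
The plan is to reduce \eqref{DBP} to a single nonlocal equation for $u$ by inverting the Bopp--Podolsky operator, and then to apply a generalized linking theorem for strongly indefinite functionals in the spirit of Kryszewski--Szulkin and Bartsch--Ding.

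\emph{Reduction and variational framework.} For each $u\in E:=H^{1/2}(\mathbb{R}^3,\mathbb{C}^4)$, the second equation admits the unique solution $\phi_u:=\mathcal{K}\ast|u|^2$, where $\mathcal{K}(x)=(1-e^{-|x|/a})/|x|$ is the fundamental solution provided by \cite{jde}. Boundedness and exponential decay of $\mathcal{K}$, together with Hardy--Littlewood--Sobolev, imply that $u\mapsto\phi_u$ is of class $C^1$ and that $T(u):=\int_{\mathbb{R}^3}\phi_u|u|^2\,dx$ is a well-defined $C^1$ quartic form. Substituting $\phi=\phi_u$ into the first equation, solutions of \eqref{DBP} correspond to critical points of
\begin{equation*}
  \mathcal{J}(u)=\tfrac{1}{2}\langle\mathcal{L}u,u\rangle_{L^2}-\tfrac{1}{4}T(u)-\int_{\mathbb{R}^3}F(x,u)\,dx,
\end{equation*}
where $\mathcal{L}:=-i\sum_{k=1}^{3}\alpha_k\partial_k+(V(x)+q)\beta+w$. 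Under $(A_1)$--$(A_2)$, $\mathcal{L}$ is self-adjoint on $L^2$ with $\sigma(\mathcal{L})\subset(-\infty,w-q]\cup[w+q,+\infty)$, so $0\notin\sigma(\mathcal{L})$; the spectral projectors induce an orthogonal splitting $E=E^+\oplus E^-$ together with the equivalent norm $\|u\|^2:=\langle|\mathcal{L}|u,u\rangle_{L^2}$, in which $\langle\mathcal{L}u,u\rangle_{L^2}=\|u^+\|^2-\|u^-\|^2$. Thus $\mathcal{J}$ is strongly indefinite.

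\emph{Linking geometry and Cerami sequence.} Condition $(f_3)$ makes $\mathcal{J}|_{E^+}$ positive on a small sphere around $0$. On each set $M_e:=E^-\oplus\mathbb{R}^+e$ with $e\in E^+$ of unit norm, the superquadraticity $(f_2)$ dominates both the indefinite quadratic part and the positive quartic term $T$, so $\mathcal{J}\leq 0$ on $\partial(M_e\cap B_R)$ once $R$ is large. The generalized linking theorem then produces a Cerami sequence $(u_n)\subset E$ at a positive minimax level. The crux is to prove $(u_n)$ bounded: suppose $\|u_n\|\to\infty$ and rescale $v_n=u_n/\|u_n\|$. In the non-vanishing regime (where, after a $\mathbb{Z}^3$-translation permissible by the periodicity of $V$ and $f$, $v_n\rightharpoonup v\neq 0$), hypothesis $(f_2)$ forces $\mathcal{J}(u_n)/\|u_n\|^2\to-\infty$, a contradiction. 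Vanishing is ruled out by $(f_4)$: the exponent $\sigma>3/2$ is tailored so that H\"older together with the embedding $H^{1/2}(\mathbb{R}^3)\hookrightarrow L^3(\mathbb{R}^3)$ bound $\int f(x,u_n)u_n\,dx$ by a power of the Cerami quantity $\mathcal{J}(u_n)-\tfrac{1}{2}\langle\mathcal{J}'(u_n),u_n\rangle=\int\widetilde F(x,u_n)\,dx+\tfrac{1}{4}T(u_n)$, which remains bounded along a Cerami sequence.

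\emph{Passage to the limit and main obstacle.} Once $(u_n)$ is bounded, the periodicity of $V$ and $f$ makes $\mathcal{J}$ invariant under $\mathbb{Z}^3$-translations, and a Lions-type vanishing/non-vanishing alternative applied to $(u_n)$ yields integer translates $\tilde u_n:=u_n(\cdot-k_n)\rightharpoonup v\neq 0$. Weak sequential continuity of $\mathcal{J}'$---in particular of $T'$, which relies on local compactness of $H^{1/2}\hookrightarrow L^p_{\mathrm{loc}}$ for $p<3$ and on the boundedness of $\mathcal{K}$---gives $\mathcal{J}'(v)=0$, so that $(v,\phi_v)$ with $\phi_v=\mathcal{K}\ast|v|^2$ is the desired pair. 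The hardest step is the Cerami boundedness: since the nonlocal quartic term $-\tfrac{1}{4}T(u)$ in $\mathcal{J}$ competes with the superquadratic $-F$, one must verify that the Cerami quantity $\int\widetilde F+\tfrac{1}{4}T(u_n)$ still controls $\int f(x,u_n)u_n$ via $(f_4)$; the fact that the Bopp--Podolsky kernel $\mathcal{K}$ is bounded and exponentially decaying (rather than the $1/|x|$ singular Coulomb kernel appearing in the Maxwell analogue \eqref{MD}) is exactly what provides the extra integrability needed for this control.
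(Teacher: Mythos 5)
Your proposal is correct and follows essentially the same route as the paper: reduction via $\phi_u=\mathcal{K}\ast|u|^2$, the generalized linking theorem for the strongly indefinite functional $J$ on $\mathbb{E}=\mathbb{E}^+\oplus\mathbb{E}^-$, boundedness of the Cerami sequence, and a Lions-type vanishing/non-vanishing alternative with $\mathbb{Z}^3$-translations to produce a nontrivial critical point; the only organizational difference is that you prove Cerami boundedness by a vanishing/non-vanishing dichotomy for the rescaled sequence $v_n=u_n/\Vert u_n\Vert$, whereas the paper argues directly via the level-set decomposition $A_n(0,a_\varepsilon)\cup A_n(a_\varepsilon,b_\varepsilon)\cup A_n(b_\varepsilon,+\infty)$, both being standard variants of the same estimate. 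One small correction: the kernel $\mathcal{K}(x)=(1-e^{-|x|/a})/|x|$ is bounded but \emph{not} exponentially decaying (it behaves like $1/|x|$ at infinity); the property actually exploited is $\mathcal{K}\le 1/a$, which gives $\int_{\mathbb{R}^3}\phi_u|u|^2\,dx\le a^{-1}\Vert u\Vert_2^4$ (Lemma \ref{lem1}-(10)) and, through the Ackermann-type estimate of Lemma \ref{lemma1}, the decay $\Gamma'(u_n)(u_n^+-u_n^-)/\Vert u_n\Vert^2\to 0$ that closes the boundedness argument.
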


From a mathematical point of view, as far as we know, this is the first work that dealt with the existence of solutions for  Dirac-Bopp-Podolsky systems. The main feature of our  is that the Dirac operator not only has unbounded positive continuous spectrum but also has unbounded negative continuous spectrum, and the corresponding energy functional is strongly indefinite. On the other hand, the main difficulty when dealing with this problem is the lack of compactness of Sobolev embedding, hence our problem poses more challenges in the calculus of variation. In order to overcome these difficulties, we will turn to the linking and concentration compactness arguments \cite{th2}.\\

The paper is organized as follows. In Sect. 2, we give the variational setting to study the system \eqref{DBP}. The Sect. 3 is devoted to proving the  linking structure of the associated energy functional with system \eqref{DBP}. In Sect. 4, we discuss the boundedness of the Cerami sequence.  Finally, we prove our main result Theorem \ref{thm1}.

%%%%%%%%%%%%%%%%%%%%%%%%%%%%%%%%%%%%%%%%%%%%
\section{Variational setting}
In this Section, we give the variational setting. 
Below by $\Vert \cdot\Vert_r$ we denote the usual $L^r$-norm. For the reader's convenience, let
$$A:=-i\sum_{k=1}^{3}\alpha_{k}\partial_{k}u+[V+q]\beta$$
be the Dirac operator. It is worth mentioning that $A$ is a selfadjoint operator acting on $L^{2}(\mathbb{R}^3,\mathbb{C}^4)$ with $\mathcal{D}(A)=H^{1}(\mathbb{R}^3,\mathbb{C}^4)$ \cite[Lemma 7.2 a]{th1}. Let $\vert A\vert$
and $\vert A\vert^{\frac{1}{2}}$ denote respectively the absolute value of $A$ and the square root of $\vert A\vert$,
and let $\lbrace \mathcal{F}_{\lambda}:\ -\infty\leq \lambda\leq +\infty\rbrace$ be the spectral family of $A$. Set $U=id-\mathcal{F}_0-\mathcal{F}_{0-}$.
Then $U$ commutes with $A$, $\vert A\vert$ and $\vert A\vert^{\frac{1}{2}}$, and $A=U\vert A\vert$ is the polar decomposition
of $A$. Let $\sigma(A)$ and $\sigma_c(A)$ be, respectively, the spectrum and  the continuous spectrum of $A$.
In order to construct a suitable variational setting for the system \eqref{DBP}, we need some notions and results.
\begin{lemma}\label{lemma4}
   Assume that $(A_2)$ holds. Then
   $$\sigma(A)=\sigma_c(A)\subset(-\infty,-q]\cap[q,+\infty),$$
   and 
   $$\inf\sigma(\vert A\vert)\leq q+\sup_{x\in \mathbb{R}^3} V(x)$$
\end{lemma}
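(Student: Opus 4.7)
The plan is to establish three separate claims: (i)~$\sigma(A)=\sigma_c(A)$, (ii)~$\sigma(A)\subset(-\infty,-q]\cup[q,+\infty)$, and (iii)~$\inf\sigma(|A|)\leq q+\sup_{x\in\mathbb{R}^{3}}V(x)$. Throughout I would exploit the anti-commutation relations $\{\alpha_k,\alpha_\ell\}=2\delta_{k\ell}$, $\{\alpha_k,\beta\}=0$, and $\beta^{2}=I$, together with the $\beta$-block decomposition $\psi=(\varphi,\chi)^{T}$ with $\varphi,\chi\in L^{2}(\mathbb{R}^{3},\mathbb{C}^{2})$, under which $A\psi=(P\chi+(V+q)\varphi,\,P\varphi-(V+q)\chi)^{T}$ with $P:=-i\sum_{k=1}^{3}\sigma_k\partial_k$.

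For (ii)---the heart of the lemma---I would expand
\begin{equation*}
\|A\psi\|^{2}=\int\bigl(|P\chi|^{2}+|P\varphi|^{2}+(V+q)^{2}(|\varphi|^{2}+|\chi|^{2})\bigr)dx+2\int(V+q)\bigl(\mathrm{Re}\langle P\chi,\varphi\rangle-\mathrm{Re}\langle\chi,P\varphi\rangle\bigr)dx.
\end{equation*}
The key observation is that by integrating by parts (using $P^{*}=P$) the $q$-part of the cross term cancels, leaving $2\int V(\mathrm{Re}\langle P\chi,\varphi\rangle-\mathrm{Re}\langle\chi,P\varphi\rangle)\,dx$. The AM--GM inequality $2V|P\chi||\varphi|\leq|P\chi|^{2}+V^{2}|\varphi|^{2}$ (and its $\varphi\leftrightarrow\chi$ analogue) then absorbs this into the kinetic pieces and an extra $\int V^{2}(|\varphi|^{2}+|\chi|^{2})\,dx$; subtracting the latter from the $(V+q)^{2}$ contribution leaves $\int(q^{2}+2qV)(|\varphi|^{2}+|\chi|^{2})\,dx\geq q^{2}\|\psi\|^{2}$, using $V\geq 0$. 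Thus $\|A\psi\|^{2}\geq q^{2}\|\psi\|^{2}$, and (ii) follows.

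For (iii), I would build a spreading trial function. Pick a constant spinor $\psi_{0}$ with $\beta\psi_{0}=\psi_{0}$ and $|\psi_{0}|=1$---for instance $\psi_{0}=(1,0,0,0)^{T}$, for which $\langle\alpha_k\psi_{0},\psi_{0}\rangle=0$ and $\mathrm{Re}\langle\alpha_k\psi_{0},\alpha_\ell\psi_{0}\rangle=\delta_{k\ell}$---fix $\varphi\in C_{c}^{\infty}(\mathbb{R}^{3},\mathbb{R})$ with $\varphi\not\equiv 0$, and set $\psi_{n}(x):=\varphi_{n}(x)\psi_{0}$ with $\varphi_{n}(x):=n^{-3/2}\varphi(x/n)$. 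A short calculation using the above identities yields $\|A\psi_{n}\|^{2}=\|\nabla\varphi_{n}\|_{2}^{2}+\int(V+q)^{2}\varphi_{n}^{2}\,dx$, whence
\begin{equation*}
\frac{\|A\psi_{n}\|^{2}}{\|\psi_{n}\|^{2}}\leq\frac{\|\nabla\varphi\|_{2}^{2}}{n^{2}\|\varphi\|_{2}^{2}}+\bigl(q+\sup_{x\in\mathbb{R}^{3}}V(x)\bigr)^{2}\longrightarrow\bigl(q+\sup_{x\in\mathbb{R}^{3}}V(x)\bigr)^{2},
\end{equation*}
and the variational characterization $\inf\sigma(|A|)=\inf_{\psi\neq 0}\|A\psi\|/\|\psi\|$ gives (iii).

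For (i), I would invoke the periodicity of $V$: since $A$ commutes with the $\mathbb{Z}^{3}$-lattice translations, the Floquet--Bloch decomposition of $A$ (or, equivalently, translating any near-eigenfunction to build a non-compact Weyl sequence) shows that every $\lambda\in\sigma(A)$ is essential, and standard unique-continuation results for periodic self-adjoint Dirac-type operators (cf.~\cite{th1}) rule out embedded eigenvalues, so $\sigma(A)=\sigma_c(A)$. I expect the main conceptual obstacle to be the algebraic subtlety in (ii): the indefinite matrix-valued term $-i\sum(\partial_k V)\alpha_k\beta$ arising in $A^{2}$ is not pointwise dominated by the nominally positive contributions, so one is forced to exploit the off-diagonal structure coming from the $\beta$-block decomposition and to integrate by parts, rather than attempting a pointwise matrix lower bound on $A^{2}$.
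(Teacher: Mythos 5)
The paper never actually proves this lemma: it is stated without proof, being imported from \cite[Lemma 7.2]{th1} and \cite{Zh} (note also that the displayed $\cap$ must be read as $\cup$, since otherwise the right-hand side is empty). Your argument is therefore a genuine addition, and it is correct. The computation in (ii) checks out: the $q$-part of the cross term vanishes by self-adjointness of $P$, and the pointwise Cauchy--Schwarz/AM--GM bound $2V|\mathrm{Re}\langle P\chi,\varphi\rangle|\le|P\chi|^{2}+V^{2}|\varphi|^{2}$ sacrifices the kinetic terms entirely but leaves $\int\bigl((V+q)^{2}-V^{2}\bigr)|\psi|^{2}\,dx\ge q^{2}\|\psi\|_{2}^{2}$ using $V\ge 0$ from $(A_2)$; this yields $\sigma(A)\cap(-q,q)=\emptyset$ without ever touching $\nabla V$, which is indeed the right way to sidestep the indefinite term $-i\sum_{k}(\partial_k V)\alpha_k\beta$ appearing in $A^{2}$. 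The trial-function computation in (iii) is also correct: $\mathrm{Re}\langle\alpha_k\psi_0,\alpha_\ell\psi_0\rangle=\delta_{k\ell}$ follows from the anticommutation relations, the cross term dies because $\langle\alpha_k\psi_0,\psi_0\rangle=0$, and $\inf\sigma(|A|)=\inf_{\psi\neq 0}\|A\psi\|_{2}/\|\psi\|_{2}$ holds since $|A|\ge 0$ and $\|A\psi\|_{2}=\||A|\psi\|_{2}$. The one part that remains at citation level is (i): translated Weyl sequences do show $\sigma(A)=\sigma_{\mathrm{ess}}(A)$, but ruling out (possibly infinitely degenerate) eigenvalues genuinely requires the Floquet--Bloch fibration plus the Thomas analytic-continuation/unique-continuation input you invoke; since the paper itself only cites \cite{th1} for the entire lemma, that level of detail is acceptable, but it is the one step you have not made self-contained.
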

It follows that the space $L^{2}(\mathbb{R}^3,\mathbb{C}^4)$ possesses the orthogonal decomposition:
$$L^{2}(\mathbb{R}^3,\mathbb{C}^4)=L^+\oplus L^-,\ u=u^++u^-$$
such that $A$ is negative definite on $L^-$ and positive definite on $L^+$. Let $\mathbb{E}:=\mathcal{D}(\vert A\vert^{\frac{1}{2}})$ be the domain of $\vert A\vert^{\frac{1}{2}}$. We define on $\mathbb{E}$ the following inner product 
$$\left\langle u,v \right\rangle=\left\langle \vert A\vert^{\frac{1}{2}}u,\vert A\vert^{\frac{1}{2}}v \right\rangle_2+w\left\langle u,v^+-v^- \right\rangle_2$$
where $\langle \cdot,\cdot\rangle_2$ denote the usual $L^2$ inner product. Therefore, the induced norm on $\mathbb{E}$ is 
$$\Vert u\Vert:=\left(\Vert \vert A\vert^{\frac{1}{2}}u\Vert_2^2+w\left(\Vert u^+\Vert_2^2-\Vert u^-\Vert_2^2\right)\right)^{\frac{1}{2}}.$$
Anyone can check that $\mathbb{E}$ 
possesses the following decomposition
$$\mathbb{E}=\mathbb{E}^+\oplus\mathbb{E}^-\ \ \text{and}\ \ \mathbb{E}^{\pm}=\mathbb{E}\cap L^{\pm}.$$
Thus,
$$
\left\lbrace
\begin{array}{l}
Au=-\vert A\vert u,\ \text{for all}\ u\in \mathbb{E}^-,\\
\ \\
Au=\vert A\vert u,\ \text{for all}\ u\in \mathbb{E}^+,\\
\ \\
\text{and}\\
\ \\
u=u^++u^-,\ \text{for all}\ u\in \mathbb{E}.
\end{array}
\right.
$$
Hence $\mathbb{E}^-$ and $\mathbb{E}^+$ are orthogonal with respect to both $\langle \cdot,\cdot\rangle_2$  and $\langle \cdot,\cdot\rangle$  inner products.\\

In what follows, we give a crucial result which is the compact and continuous embedding of the new space $\mathbb{E}$ into Lebesgue spaces.
\begin{lemma}[\cite{Zh}]\label{lemma5}\ \\
    Assume that $(A_1)-(A_2)$ are satisfied. Then, $\mathbb{E}=H^{\frac{1}{2}}(\mathbb{R}^3,\mathbb{C}^4)$, with equivalent norms. Moreover, we have that 
    \begin{enumerate}
        \item[$(1)$] the embedding $\mathbb{E}\hookrightarrow L^{p}$ is continuous for all $p\in[2,3]$;
        \item[$(2)$] the embedding $\mathbb{E}\hookrightarrow L^{p}_{\text{loc}}$ is compact for all $p\in[1,3)$;
        \item[$(3)$] $(q-\vert w\vert)\Vert u\Vert^2_2\leq \Vert u\Vert^2$, for all $u\in \mathbb{E}$.
    \end{enumerate}
\end{lemma}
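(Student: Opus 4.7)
The plan is to reduce the claim to well-known facts about the free Dirac operator, treat $V\beta$ as a bounded perturbation, and then invoke standard Sobolev and Rellich-Kondrachov theorems.

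\textbf{Step 1 (identification of the form domain for the free operator).} Consider first the free Dirac operator $A_0 := -i\sum_{k=1}^{3}\alpha_k\partial_k + q\beta$. The anticommutation relations $\{\alpha_k,\alpha_\ell\} = 2\delta_{k\ell}$, $\{\alpha_k,\beta\} = 0$, $\beta^2 = 1$ give $A_0^2 = (-\Delta + q^2)\,\mathrm{I}_4$. Passing to the Fourier side one sees that $|A_0| = (-\Delta + q^2)^{1/2}$, so $\mathcal{D}(|A_0|^{1/2}) = H^{1/2}(\mathbb{R}^3,\mathbb{C}^4)$ and $\||A_0|^{1/2} u\|_2$ is equivalent to the standard $H^{1/2}$ norm.

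\textbf{Step 2 (bounded perturbation).} Hypothesis $(A_2)$ says $V \in C^1(\mathbb{R}^3, \mathbb{R}_+)$ is $1$-periodic, hence bounded. Then $V\beta$ is a bounded symmetric operator on $L^2$, and by the Kato-Rellich theorem $A = A_0 + V\beta$ is self-adjoint on $\mathcal{D}(A_0) = H^1$. By Lemma \ref{lemma4}, $\sigma(A) \subset (-\infty,-q] \cup [q,\infty)$, so $|A| \ge q > 0$. A quadratic-form comparison based on $A^2 = A_0^2 + A_0(V\beta) + (V\beta)A_0 + (V\beta)^2$, combined with the spectral gap, shows that $|A|^{1/2}$ and $|A_0|^{1/2}$ share the same form domain with equivalent graph norms. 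Therefore $\mathbb{E} = H^{1/2}(\mathbb{R}^3,\mathbb{C}^4)$ with equivalent norms.

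\textbf{Step 3 (embeddings and coercivity).} Part (1) is the standard fractional Sobolev embedding $H^{1/2}(\mathbb{R}^3) \hookrightarrow L^p(\mathbb{R}^3)$ for $p \in [2, 2^*]$ with critical exponent $2^* = \frac{2\cdot 3}{3-1} = 3$. Part (2) is the Rellich-Kondrachov theorem for the fractional Sobolev space $H^{1/2}$: on any bounded domain the embedding into $L^p$ is compact for $p < 3$, which localizes to $L^p_{\mathrm{loc}}$. For part (3), $\sigma(|A|) \subset [q,\infty)$ gives $\||A|^{1/2} u\|_2^2 \ge q\|u\|_2^2$. The $L^2$-orthogonality of $\mathbb{E}^\pm$ implies $|w(\|u^+\|_2^2 - \|u^-\|_2^2)| \le |w|\|u\|_2^2$, and summing yields $\|u\|^2 \ge (q-|w|)\|u\|_2^2$, which is strictly positive by $(A_1)$.

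The main technical point is Step 2: showing that the form domains and norms of $|A|^{1/2}$ and $|A_0|^{1/2}$ coincide under the perturbation $V\beta$. This is tractable because $V\beta$ is not merely form-small but bounded in operator norm; still, one must exploit the spectral gap from Lemma \ref{lemma4} carefully to make the form comparison yield an equivalence (rather than just an inequality) of norms on $\mathbb{E}$.
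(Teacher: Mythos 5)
The paper does not prove this lemma at all --- it is quoted from \cite{Zh} --- and your argument reconstructs essentially the standard proof given there and in Ding's book: compute $A_0^2=-\Delta+q^2$ to identify $\mathcal{D}(|A_0|^{1/2})=H^{1/2}(\mathbb{R}^3,\mathbb{C}^4)$, absorb the bounded periodic perturbation $V\beta$ using the spectral gap $|A|\ge q$, and conclude with the fractional Sobolev embedding ($2^*=3$ for $s=1/2$, $N=3$) and local Rellich compactness, the coercivity estimate in (3) following from $\||A|^{1/2}u\|_2^2\ge q\|u\|_2^2$ and the $L^2$-orthogonality of $u^{\pm}$. The one step you leave implicit in Step 2 is the passage from the equivalence of graph norms $\|Au\|_2^2+\|u\|_2^2\sim\|A_0u\|_2^2+\|u\|_2^2$ (immediate from boundedness of $V\beta$) to the equivalence of the \emph{form} norms $\langle|A|u,u\rangle_2\sim\langle|A_0|u,u\rangle_2$; this requires either the L\"owner--Heinz inequality (operator monotonicity of $t\mapsto t^{1/2}$ applied to $A^2\le C A_0^2$, using $A_0^2\ge q^2$) or complex interpolation between $\mathcal{D}(A)=\mathcal{D}(A_0)=H^1$ and $L^2$ --- once that tool is named, your proof is complete.
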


An important fact involving system \eqref{DBP} is that this class of system can be transformed into a
Dirac equation with a nonlocal term (see\cite{jde}), which allows us to use variational
methods. Effectively, as we mentioned in Section 1, in \cite{jde}, P. d'Avenia and G. Siciliano, by the Lax-Milgram Theorem, proved that for a given $u\in H^1(\mathbb{R}^3,\mathbb{C}^4)$, there exists a unique $\phi_u\in \mathcal{D}$ such that
$$-\triangle\phi_u+a^2\triangle^2 \phi_u=4\pi \vert u\vert^2,\ \text{in}\ \mathbb{R}^3,$$
where $\mathcal{D}$ is the completion of $C_0^{\infty}(\mathbb{R}^3)$ with respect to the norm $\Vert \cdot\Vert_{\mathcal{D}}$ induced by the inner product
$$\langle \psi,\varphi\rangle_{\mathcal{D}}:=\int_{\mathbb{R}^3}\nabla \psi\nabla\varphi+a^2\triangle\psi\triangle\varphi dx.$$
Otherwise, $$\mathcal{D}:=\left\lbrace \phi\in \mathcal{D}^{1,2}(\mathbb{R}^3):\ \triangle \phi\in L^2(\mathbb{R}^3)\right\rbrace,$$
with  
$$\mathcal{D}^{1,2}(\mathbb{R}^3):=\left\lbrace \phi\in L^{6}(\mathbb{R}^3):\ \nabla \phi\in L^2(\mathbb{R}^3)\right\rbrace.$$
The space $\mathcal{D}$ is an Hilbert space continuously embedded into $\mathcal{D}^{1,2}(\mathbb{R}^3)$, $L^6(\mathbb{R}^3)$ and $L^\infty (\mathbb{R}^3).$\\

Arguing as in \cite{jde}, we prove that the unique solution has the following expression $\displaystyle{\phi_u:=\mathcal{K}*\vert u\vert^2=\frac{1-e^{-\frac{\vert x\vert}{a}}}{\vert x\vert}*\vert u\vert^2}$, for all $u\in H^1(\mathbb{R}^3,\mathbb{C}^4)$, and it   verifies the following properties:
\begin{lemma}\label{lem1}\ \\
For any $u\in H^1(\mathbb{R}^3,\mathbb{C}^4)$, we have: 
\begin{enumerate}
    \item[$(1)$] for every $y\in \mathbb{R}^3$, $\phi_{u(\cdot+y)}=\phi_{u}(\cdot+y)$;
    \item[$(2)$] $\phi_u\geq 0$;
    \item[$(3)$] for every $r\in(3,+\infty]$,\ $\phi_u\in L^r(\mathbb{R}^3)\cap C_0(\mathbb{R}^3)$;
    \item[$(4)$]for every $r\in(\frac{3}{2},+\infty]$, $\nabla\phi_u=\nabla\left(\frac{1-e^{-\frac{\vert x\vert}{a}}}{\vert x\vert}\right)*\vert u\vert^2\in L^{r}(\mathbb{R}^3)\cap C_0(\mathbb{R}^3)$;
    \item[$(5)$]$\phi_u\in\mathcal{D}$;
    \item[$(6)$]$\Vert \phi_u\Vert_6\leq C\Vert u\Vert^2$;
    \item[$(7)$] $\phi_u$ is the unique minimizer of the functional
    $$E(\phi):=\frac{1}{2}\Vert \nabla \phi\Vert^2_2+\frac{a^2}{2}\Vert \triangle \phi\Vert^2_2-\int_{\mathbb{R}^3}\phi \vert u\vert^2dx,\ \ \text{for all}\ \phi \in\mathcal{D};$$
    \item[$(8)$]if $v_n\rightharpoonup v$ in $H^1(\mathbb{R}^3,\mathbb{C}^4)$, then $\phi_{v_n}\rightharpoonup \phi_v$ in $\mathcal{D}$ and $\displaystyle{\int_{\mathbb{R}^3}\phi_{u_n}\vert u_n\vert^2 dx\rightarrow \int_{\mathbb{R}^3}\phi_{u}\vert u\vert^2 dx };$% and $\displaystyle{\lim\limits_{n\rightarrow +\infty\int_{\math}}}$.
   % \item[$(9)$] $\displaystyle{\int_{\mathbb{R}^3}\int_{\mathbb{R}^3}\frac{u^2(x)u^2(y)}{\vert x-y\vert}dxdy\leq ...}$
   \item[$(9)$] $\phi_{tu}=t^2\phi_u$, for all $t\in \mathbb{R}_+$;
   \item[$(10)$]$\displaystyle{\int_{\mathbb{R}^3}\phi_u \vert u\vert^2dx=\int_{\mathbb{R}^3}\int_{\mathbb{R}^3}\frac{1-e^{-\frac{\vert x-y\vert}{a}}
   }{\vert x-y\vert}\vert u(x)\vert^2\vert u(y)\vert^2dxdy\leq \frac{1}{a}\Vert u\Vert_2^4}.$
\end{enumerate}
\end{lemma}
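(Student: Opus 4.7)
The plan is to work directly from the convolution representation $\phi_u = \mathcal{K} * |u|^2$ produced by the Lax--Milgram step of \cite{jde}. After verifying that $\mathcal{K}$ is the fundamental solution of $-\Delta+a^2\Delta^2=4\pi\delta_0$ (for instance, by computing $\widehat{\mathcal{K}}(\xi)=4\pi/(|\xi|^2+a^2|\xi|^4)$ and matching it with the Fourier symbol of the operator), items (1), (2), (9), (10) fall out immediately: (1) is a change of variables in the convolution; (2) follows from $\mathcal{K}\geq 0$, evident because $1-e^{-|x|/a}\geq 0$ and $\mathcal{K}(0)=1/a$ by L'Hospital; (9) follows from linearity of the Bopp--Podolsky equation in $\phi$; and (10) follows from the fact that the radial profile $t\mapsto(1-e^{-t/a})/t$ is nonincreasing on $(0,\infty)$ with supremum $1/a$, hence $\mathcal{K}(x-y)\leq 1/a$ pointwise.

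For (3) and (4), a short computation shows that $\mathcal{K}$ is bounded on $\mathbb{R}^3$ and behaves like $|x|^{-1}$ at infinity, so $\mathcal{K}\in L^r(\mathbb{R}^3)$ for every $r\in(3,\infty]$; since $|u|^2\in L^1$ when $u\in H^1$, Young's convolution inequality gives $\phi_u\in L^r$ on the same range, and the $C_0$ property follows by approximating $\mathcal{K}$ in $L^r$ by compactly supported continuous kernels. Similarly, $\nabla\mathcal{K}$ is bounded near the origin (the apparent $|x|^{-2}$ part cancels because of the factor $1-e^{-|x|/a}$) and decays like $|x|^{-2}$ at infinity, so $\nabla\mathcal{K}\in L^r$ for $r>3/2$, and another application of Young's inequality gives (4).

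Items (5), (6), (7) are gathered from a single energy identity. Testing the equation for $\phi_u$ against $\phi_u$ itself yields
\[
\|\nabla\phi_u\|_2^2 + a^2\|\Delta\phi_u\|_2^2 \;=\; 4\pi\int_{\mathbb{R}^3}\phi_u|u|^2\,dx \;\leq\; 4\pi\,\|\phi_u\|_6\,\|u\|_{12/5}^2.
\]
Combining with $\|\phi_u\|_6\leq C\|\nabla\phi_u\|_2$ (Sobolev) and $\|u\|_{12/5}\leq C\|u\|_{H^1}$ ($H^1\hookrightarrow L^{12/5}$) and absorbing a factor of $\|\phi_u\|_{\mathcal{D}}$ produces $\|\phi_u\|_{\mathcal{D}}+\|\phi_u\|_6\leq C\|u\|_{H^1}^2$, proving (5) and (6). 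For (7), $E$ is the sum of the strictly convex quadratic $\tfrac12\|\phi\|_{\mathcal{D}}^2$ and a continuous linear form on $\mathcal{D}$, hence has a unique minimizer, which is characterized by its Euler--Lagrange equation, which is precisely the Bopp--Podolsky equation; thus the minimizer must be $\phi_u$.

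The main obstacle is (8). From (6), $(\phi_{v_n})$ is bounded in $\mathcal{D}$, so up to a subsequence $\phi_{v_n}\rightharpoonup\psi$ in $\mathcal{D}$. Testing the defining equation for $\phi_{v_n}$ against an arbitrary $\varphi\in C_0^\infty(\mathbb{R}^3)$ and passing to the limit---the left-hand side by weak convergence in $\mathcal{D}$, the right-hand side by the Rellich compact embedding $H^1\hookrightarrow L^2_{\mathrm{loc}}$, which gives $|v_n|^2\to|v|^2$ in $L^1_{\mathrm{loc}}$---identifies $\psi=\phi_v$; density of $C_0^\infty$ in $\mathcal{D}$ and uniqueness of the limit then promote this to weak convergence of the full sequence. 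For the integral convergence, I would split
\[
\int_{\mathbb{R}^3}\phi_{v_n}|v_n|^2\,dx - \int_{\mathbb{R}^3}\phi_v|v|^2\,dx \;=\; \int_{\mathbb{R}^3}\phi_{v_n}(|v_n|^2-|v|^2)\,dx + \int_{\mathbb{R}^3}(\phi_{v_n}-\phi_v)|v|^2\,dx,
\]
handle the second term using the weak convergence $\phi_{v_n}\rightharpoonup\phi_v$ in $L^6$ tested against $|v|^2\in L^{6/5}$, and handle the first by splitting $\mathbb{R}^3=B_R\cup B_R^c$: on $B_R$ the compact embedding $H^1(B_R)\hookrightarrow L^{12/5}(B_R)$ gives strong convergence $|v_n|^2\to|v|^2$ in $L^{6/5}(B_R)$, while the tails on $B_R^c$ are controlled uniformly in $n$ by $\|\phi_{v_n}\|_6$ together with the tightness of $\|v\|_{L^{12/5}(B_R^c)}$ as $R\to\infty$. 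This tail step is the most delicate point and is exactly where one leans on the argument given in \cite[Lemma 3.2]{jde}.
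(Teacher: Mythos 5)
Most of your proposal is sound and follows the same route as d'Avenia--Siciliano \cite{jde}, which is effectively all the paper does here: it states the lemma with no proof beyond the phrase ``arguing as in \cite{jde}''. Items (1)--(7), (9), (10) are handled correctly. Two cosmetic remarks: for the $C_0$ part of (3) at $r=\infty$ you should run the density argument with a finite conjugate pair (say $\mathcal{K}\in L^4$, $|u|^2\in L^{4/3}$), since compactly supported functions are not dense in $L^\infty$; and testing the equation against $\phi_u$ in (5)--(6) presupposes $\phi_u\in\mathcal{D}$, which is harmless only if you take the Lax--Milgram solution as the definition and identify it with the convolution afterwards, rather than the other way around.

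The genuine gap is the tail estimate in the second half of (8), and you have located it yourself. To make $\int_{B_R^c}\phi_{v_n}\,(|v_n|^2-|v|^2)\,dx$ small you bound it by $\|\phi_{v_n}\|_6\bigl(\|v_n\|_{L^{12/5}(B_R^c)}^2+\|v\|_{L^{12/5}(B_R^c)}^2\bigr)$, but only the second summand is controlled by ``tightness of $v$''; the first requires tightness of the \emph{sequence} $(v_n)$, which weak convergence in $H^1(\mathbb{R}^3)$ does not provide. This is not a repairable oversight, because the claim $\int_{\mathbb{R}^3}\phi_{v_n}|v_n|^2\,dx\to\int_{\mathbb{R}^3}\phi_{v}|v|^2\,dx$ under mere weak convergence is false: take $0\neq u\in C_0^\infty(\mathbb{R}^3,\mathbb{C}^4)$ and $v_n:=u(\cdot-ne_1)$, so that $v_n\rightharpoonup 0$, while by the translation invariance you established in item (1) one has $\int\phi_{v_n}|v_n|^2\,dx=\int\phi_{u}|u|^2\,dx>0$ for every $n$ (the kernel $\mathcal{K}$ is strictly positive), which does not tend to $0=\int\phi_0\cdot 0\,dx$. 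What does survive under weak convergence is exactly what your first argument proves, namely $\phi_{v_n}\rightharpoonup\phi_v$ in $\mathcal{D}$, together with a Brezis--Lieb type splitting $\Gamma(v_n)-\Gamma(v_n-v)-\Gamma(v)\to 0$ (this is what Lemma \ref{lemma2}(3) records, and what \cite{jde} actually proves); the convergence of the full nonlocal energy requires strong convergence of $v_n$ in $L^{12/5}$, or restriction to a radial or otherwise compactly embedded subspace. So the defect lies in the transcription of item (8) in the statement itself; your attempt correctly isolates the delicate point but cannot close it, nor could any proof.
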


Therefore, the pair $(u,\phi)\in H^1(\mathbb{R}^3,\mathbb{C}^4)\times \mathcal{D}$ is a solution of \eqref{DBP} if, and only if, for all $u\in H^1(\mathbb{R}^3,\mathbb{C}^4)$, we have that $\phi=\phi_u$ is a weak solution of the following nonlocal problem
\begin{equation}\label{P}
   \displaystyle{ -i\sum_{k=1}^{3}\alpha_{k}\partial_{k}u+[V(x)+q]\beta u+wu-\phi_u u}=f(x,u),  \ \ \text{in}\ \mathbb{R}^3. \tag{$\mathcal{P}$}
\end{equation}

Next, we would like to mention that, from assumptions $(f_1)-(f_2)$  and Lemma \ref{lem1}, the existence of solutions for problem \eqref{P} can be made via
variational methods. In particular,
 the corresponding energy functional to problem \eqref{P} is $J:\mathbb{E}\longrightarrow \mathbb{R}$, which is defined by 
\begin{align}\label{J}
J(u):=\frac{1}{2} \left(\Vert u^+\Vert^2-\Vert u^-\Vert^2\right)-\Gamma(u)-\int_{\mathbb{R}^3}F(x,u)dx,\ \text{ for all}\ u\in\mathbb{E},
\end{align}
where
$$
\Gamma(u):=\frac{1}{4}\int_{\mathbb{R}^3}\phi_u \vert u\vert^2dx=\frac{1}{4}\int_{\mathbb{R}^3}\int_{\mathbb{R}^3}\frac{1-e^{-\frac{\vert x-y\vert}{a}}
   }{\vert x-y\vert}\vert u(x)\vert^2\vert u(y)\vert^2dxdy.
$$
The functional $J$ belongs to $C^1\left(\mathbb{E},\mathbb{R}\right)$ and a standard argument
shows that critical points of $J$ are solutions of problem \eqref{P} (see \cite{th1,th2}).\\

Once we apply variational
methods on the problem \eqref{P}, we find  the following nonlocal term $\displaystyle{\int_{\mathbb{R}^3}\phi_u \vert u\vert^2dx}$ which is homogeneous of degree 4. Thus, the natural  corresponding Ambrosetti-Rabinowitz
condition on $f(x,u)$ is the following:
\begin{enumerate}
    \item[(AR)] There exists $\Theta>4$ such that $0<\Theta F(x,u)\leq f(x,u)\vert u\vert$, for a.a. $x\in \mathbb{R}^3$ and all $\vert u\vert >0$.
\end{enumerate}
Therefore, our assumption $(f_4)$ is weaker than the (AR) condition. Indeed, the following examples satisfies assumptions $(f_1)-(f_2)$ but not the (AR) condition:
\begin{enumerate}
    \item[$(1)$] $f(x,u)=b(x)\vert u\vert \ln(1+\vert u\vert)$, where $b\in C(\mathbb{R}^3,\mathbb{R})$ and is $1$-periodic in $x_k,\ k=1,2,3$,
    \item[$(2)$] $\displaystyle{F(x,u)=b(x)\left(\vert u\vert^{\nu}+(\nu-2)\vert u\vert^{\nu-\varepsilon}\text{sin}^2\left(\frac{\vert u\vert^{\varepsilon}}{\varepsilon} \right)\right)}$, where $\nu>2$ and $0<\varepsilon<\frac{6-\nu}{2}$.
\end{enumerate}
%%%%%%%%%%%%%%%%%%%%%%%%%%%%%%%%%%%%%%%%%%%%%%%%
\section{The linking structure}
In this section, we discuss the linking structure of the functional $J$.
First, let $r>0$, set $B_r:=\lbrace  u\in\mathbb{E}:\ \Vert u\Vert \leq r\rbrace$ and $S_r:=\lbrace u\in\mathbb{E}:\ \Vert u\Vert = r\rbrace $. Let us observe that from assumptions $(f_1)-(f_4)$,  for any $\varepsilon>0$, there exist positive constants $r_\varepsilon>0$ and $C_\varepsilon$ such that
\begin{equation}\label{eq994}
    \left\lbrace
\begin{array}{l}
   \vert f(x,u)\vert\leq \varepsilon\vert u\vert,\ \text{for all}\ 0\leq \vert u\vert \leq r_\varepsilon\ \text{and all}\ x\in \mathbb{R}^3,\\
   \ \\
   \vert f(x,u)\vert\leq \varepsilon\vert u\vert+C_\varepsilon\vert u\vert^{p-1},\ \text{for all}\ (x,u)\in \mathbb{R}^3\times\mathbb{C}^4,\\
   \ \\
   \text{and}\\
   \ \\
   \vert F(x,u)\vert\leq \varepsilon\vert u\vert^2+C_\varepsilon\vert u\vert^{p},\ \text{for all}\ (x,u)\in\mathbb{R}^3\times\mathbb{C}^4,
\end{array}
    \right.
\end{equation}
where $p\in (2,3)$.\\

By a standard argument of \cite{th2} and arguing as in \cite[Lemma 3.1]{Zh}, we get the following result.
\begin{lemma}\label{lemma8}
    Under the assumption of Theorem \ref{thm1}, we have 
    \begin{enumerate}
        \item[$(1)$] $\Gamma$ and  $\displaystyle{\int_{\mathbb{R}^3}F(x,u)dx} $  are non-negative, weakly sequentially lower semi-continuous;
        \item[$(2)$] $\Gamma^{'}$ and $\displaystyle{\int_{\mathbb{R}^3}f(x,u)dx} $ are weakly sequentially continuous;
        \item[$(3)$] there exists $\xi>0$ such that
        for any $c>0$
        $$\Vert u\Vert\leq \xi \Vert u^+\Vert,\ \ \text{for all}\ u\in J_c,$$
        where $J_c:=\lbrace u\in \mathbb{E}:\ \ J(u)\geq c \rbrace.$
    \end{enumerate}
\end{lemma}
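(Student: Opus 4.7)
The plan is to handle the three assertions in order, each leveraging the properties established for $\phi_u$ in Lemma~\ref{lem1}, the embedding properties of $\mathbb{E}$ in Lemma~\ref{lemma5}, and the growth bounds \eqref{eq994} on $f,F$.

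For (1), non-negativity of $\Gamma$ is immediate from $\phi_u\geq 0$ (Lemma~\ref{lem1}(2)) and the definition $\Gamma(u)=\frac{1}{4}\int_{\mathbb{R}^3}\phi_u|u|^2dx$; non-negativity of $\int_{\mathbb{R}^3}F(x,u)dx$ follows from $(f_1)$. For weak sequential lower semicontinuity, I would fix $u_n\rightharpoonup u$ in $\mathbb{E}$. Lemma~\ref{lemma5}(2) yields $u_n\to u$ in $L^p_{\mathrm{loc}}$ for every $p\in[1,3)$, so up to a subsequence $u_n\to u$ a.e.\ in $\mathbb{R}^3$. Since $F(x,u)\geq 0$, Fatou's lemma gives the lower semicontinuity of $u\mapsto\int_{\mathbb{R}^3}F(x,u)dx$. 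For $\Gamma$, the actual convergence $\int_{\mathbb{R}^3}\phi_{u_n}|u_n|^2dx\to\int_{\mathbb{R}^3}\phi_u|u|^2dx$ from Lemma~\ref{lem1}(8) delivers weak sequential continuity (which is stronger than what is claimed).

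For (2), I would compute from Lemma~\ref{lem1} that $\langle\Gamma'(u),\varphi\rangle=\mathrm{Re}\int_{\mathbb{R}^3}\phi_u u\cdot\overline{\varphi}\,dx$ and show this depends weakly sequentially continuously on $u$. Given $u_n\rightharpoonup u$, Lemma~\ref{lem1}(8) gives $\phi_{u_n}\rightharpoonup\phi_u$ in $\mathcal{D}$ and hence in $L^6(\mathbb{R}^3)$; combining this with the local strong convergence $u_n\to u$ in $L^p_{\mathrm{loc}}$ (Lemma~\ref{lemma5}(2)) and a standard cut-off/tail-decay argument (the tail being controllable since $\{u_n\}$ is bounded in $\mathbb{E}$ and $\{\phi_{u_n}\}$ in $\mathcal{D}$) yields $\langle\Gamma'(u_n),\varphi\rangle\to\langle\Gamma'(u),\varphi\rangle$ for every fixed $\varphi\in\mathbb{E}$. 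The weak sequential continuity of the derivative of $u\mapsto\int_{\mathbb{R}^3}F(x,u)dx$ is proved analogously: the subcritical estimate $|f(x,u)|\leq\varepsilon|u|+C_\varepsilon|u|^{p-1}$ from \eqref{eq994} with $p\in(2,3)$ allows one to split the integral $\int f(x,u_n)\cdot\overline{\varphi}$ into a local piece (treated by Lemma~\ref{lemma5}(2)) and a tail piece (absorbed by the $\varepsilon$-term).

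For (3), the argument is essentially algebraic once part (1) is available. If $u\in J_c$ with $c>0$, rearranging the definition \eqref{J} of $J$ gives
$$\frac{1}{2}\left(\Vert u^+\Vert^2-\Vert u^-\Vert^2\right)=J(u)+\Gamma(u)+\int_{\mathbb{R}^3}F(x,u)dx\geq c>0,$$
where I discarded the two nonlinear terms using part (1). Thus $\Vert u^-\Vert^2\leq\Vert u^+\Vert^2$, and the orthogonal decomposition of $\mathbb{E}$ yields $\Vert u\Vert^2=\Vert u^+\Vert^2+\Vert u^-\Vert^2\leq 2\Vert u^+\Vert^2$, so $\xi=\sqrt{2}$ works uniformly in $c>0$.

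The main obstacle is part (2): the weak sequential continuity of $\Gamma'$ and the nonlinear derivative near the critical exponent $p=3$. The argument hinges on combining Lemma~\ref{lem1}(8) with the strict subcriticality $p<3$ in \eqref{eq994}, and must be handled carefully because $\mathbb{E}$ is not compactly embedded in $L^p(\mathbb{R}^3)$ globally—only locally, per Lemma~\ref{lemma5}(2)—so tail-control is essential.
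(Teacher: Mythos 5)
Your proposal is essentially the standard argument that the paper itself does not write out (it only points to Willem \cite{th2} and to Lemma 3.1 of \cite{Zh}), and in substance it is correct: non-negativity from $\phi_u\ge 0$ and $(f_1)$; lower semicontinuity of $\int F$ by a.e.\ convergence plus Fatou; weak sequential continuity of the derivatives by the local compact embedding of Lemma \ref{lemma5}(2) combined with tail control coming from the strict subcriticality $p<3$ and the decay of the fixed test function; and part (3) by the clean algebraic rearrangement $\frac12(\Vert u^+\Vert^2-\Vert u^-\Vert^2)=J(u)+\Gamma(u)+\int F\ge c>0$, giving $\Vert u\Vert^2=\Vert u^+\Vert^2+\Vert u^-\Vert^2\le 2\Vert u^+\Vert^2$ and $\xi=\sqrt2$ uniformly in $c$. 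That last computation is exactly right and is the cleanest part of the lemma.

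One caution on part (1) for $\Gamma$: you lean entirely on Lemma \ref{lem1}(8) and even remark that it yields weak sequential \emph{continuity} of $\Gamma$. Be suspicious of that conclusion. Lemma \ref{lem1}(8) is stated for sequences converging weakly in $H^1(\mathbb{R}^3,\mathbb{C}^4)$, whereas here $u_n\rightharpoonup u$ only in $\mathbb{E}=H^{1/2}$; moreover, full weak continuity of $\Gamma$ would contradict the $\mathbb{Z}^3$-invariance of the problem (translated bumps $u(\cdot-y_n)$ with $|y_n|\to\infty$ converge weakly to $0$ while $\Gamma$ is translation invariant and nonzero), and it would trivialize the concentration-compactness step later in the paper. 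The safe and sufficient route for the lemma as stated is to apply Fatou directly to the nonnegative double integral $\Gamma(u)=\tfrac14\int\int\mathcal{K}(x-y)|u_n(x)|^2|u_n(y)|^2\,dx\,dy$ using the a.e.\ convergence you already extracted; this gives exactly the claimed lower semicontinuity without invoking the dubious strong conclusion of Lemma \ref{lem1}(8). Your splitting argument for $\Gamma'$ in part (2) only needs $\phi_{u_n}\rightharpoonup\phi_u$ in $L^6$ (obtainable from boundedness of $\{\phi_{u_n}\}$ in $\mathcal{D}$ and identification of the limit locally), so it survives this caveat.
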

\begin{prop}\label{prop7}
    Assume that assumptions of Theorem \ref{thm1} are satisfied. Then, there exists $r>0$ such that 
    $$\rho:=\inf J(S_r\cap \mathbb{E}^+)>0.$$
\end{prop}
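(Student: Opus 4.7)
The plan is a standard ``mountain-pass-type'' lower bound on the positive subspace, exploiting the fact that on $\mathbb{E}^+$ the quadratic part of $J$ is positive definite while all the remaining terms are superquadratic in $\|u\|$.

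First I would note that for $u \in \mathbb{E}^+$ we have $u^- = 0$ and $u^+ = u$, so the expression for $J$ collapses to
\begin{equation*}
J(u) = \tfrac{1}{2}\|u\|^2 - \Gamma(u) - \int_{\mathbb{R}^3} F(x,u)\,dx.
\end{equation*}
The strategy is then to bound the two ``bad'' terms by higher powers of $\|u\|$ via (i) the Sobolev-type embeddings of Lemma \ref{lemma5}, and (ii) the estimate of Lemma \ref{lem1}(10) for $\Gamma$.

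Next I would estimate the nonlinear term. Using the third inequality in \eqref{eq994} with an arbitrary $\varepsilon>0$ and an exponent $p\in(2,3)$, together with the continuous embeddings $\mathbb{E}\hookrightarrow L^2$ and $\mathbb{E}\hookrightarrow L^p$ from Lemma \ref{lemma5}(1), I obtain constants $C_0,C_p>0$ such that
\begin{equation*}
\int_{\mathbb{R}^3} F(x,u)\,dx \le \varepsilon\,\|u\|_2^2 + C_\varepsilon\,\|u\|_p^p \le C_0\varepsilon\,\|u\|^2 + C_p\,\|u\|^p.
\end{equation*}
For the nonlocal term, Lemma \ref{lem1}(10) combined with Lemma \ref{lemma5}(3) yields
\begin{equation*}
\Gamma(u) = \tfrac{1}{4}\int_{\mathbb{R}^3}\phi_u |u|^2\,dx \le \tfrac{1}{4a}\|u\|_2^4 \le C_\Gamma\,\|u\|^4.
\end{equation*}

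Putting these together, for any $u \in S_r\cap\mathbb{E}^+$ I get
\begin{equation*}
J(u) \ge \left(\tfrac{1}{2} - C_0\varepsilon\right) r^2 - C_p\,r^p - C_\Gamma\,r^4.
\end{equation*}
Fixing $\varepsilon$ small so that $\tfrac{1}{2}-C_0\varepsilon \ge \tfrac{1}{4}$, the bound becomes $J(u) \ge \tfrac{1}{4}r^2 - C_p\,r^p - C_\Gamma\,r^4$. Since $p>2$ and $4>2$, by choosing $r>0$ sufficiently small the quadratic term dominates, and $\rho := \inf J(S_r\cap\mathbb{E}^+) > 0$ as required.

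There is no real obstacle here; the only thing to watch is that $\Gamma$ is \emph{quartic} rather than quadratic in $\|u\|$, so unlike the Maxwell-Dirac setting of \cite{Zh} the nonlocal term is harmless near the origin and actually helps rather than hurts the argument. All constants depend only on the embedding constants of $\mathbb{E}\hookrightarrow L^p$ (for $p=2,p\in(2,3)$) and on the Bopp-Podolsky parameter $a$, both of which are provided by Lemmas \ref{lemma5} and \ref{lem1}.
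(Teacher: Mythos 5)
Your proof is correct and follows essentially the same route as the paper: restrict to $\mathbb{E}^+$, bound $\int_{\mathbb{R}^3}F(x,u)\,dx$ by $\varepsilon C\|u\|^2+C_\varepsilon\|u\|^p$ via \eqref{eq994} and the embeddings of Lemma \ref{lemma5}, bound $\Gamma(u)\le C\|u\|^4$ via Lemma \ref{lem1}(10), then choose $\varepsilon$ and afterwards $r$ small. The only cosmetic difference is that the paper writes the estimate for general $u\in S_r$ (implicitly using $u=u^+$), whereas you make the restriction to $\mathbb{E}^+$ explicit, which is cleaner.
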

\begin{proof}
From Lemma \ref{lemma5}, we have 
$$\Vert u\Vert_p^p\leq c_p\Vert u\Vert^p,\ \text{for all}\  u\in\mathbb{E}\ \text{and all}\ p\in(2,3).$$
It follows, by \eqref{eq994} and Lemma \ref{lem1}-(10), that
\begin{align*}
 J(u) & = \frac{1}{2} \left(\Vert u^+\Vert^2-\Vert u^-\Vert^2\right)-\Gamma(u)-\int_{\mathbb{R}^3}F(x,u)dx \\
 & \geq \frac{1}{2}\Vert u\Vert^2-C_1\Vert u\Vert^4-\varepsilon C_2 \Vert u\Vert^2-C_{\varepsilon}c_p\Vert u\Vert^p\\
 &= \left(\frac{1}{2}-\varepsilon C_2\right)\Vert u\Vert^2-C_1\Vert u\Vert^4-C_{\varepsilon}c_p\Vert u\Vert^p.
\end{align*}
Choosing $\varepsilon=\frac{1}{4C_2}$ in the previous inequality and using the fact that $p\in (2,3)$, we can find $r>0$ sufficiently small such that
$$J(u)>0,\ \text{for all}\ u\in S_r.$$
Thus, the desired result holds.
\end{proof}
As a consequence of Lemma \ref{lemma4}, we have 
$$q\leq \overline{\Lambda}\leq q +\sup_{\mathbb{R}^3}V,$$
where $\overline{\Lambda}:=\inf \sigma(A)\cap [0,+\infty).$\\
Next, we set $\overline{\mu}:=2\overline{\Lambda}$ and we take a number $\mu$ satisfying
\begin{equation}\label{eq888}
\overline{\Lambda}\leq \mu\leq \overline{\mu}.
\end{equation}
Since the operator $A$ is invariant under the action of $\mathbb{Z}^3$ ( by $(A_2)$), the subspace $Y_0:=\left(\mathcal{F}_{\mu}-\mathcal{F}_0\right)L^2$
is infinite-dimensional, and
\begin{equation}\label{eq991}
\overline{\Lambda}\Vert u \Vert^2_2\leq \Vert u\Vert^2\leq \mu\Vert u \Vert^2_2,\ \ \text{for all}\ u\in Y_0.
\end{equation}
For any finite-dimensional subspace $Y$ of $Y_0$, we set $\mathbb{E}_Y:=\mathbb{E}^-\oplus Y$. 
\begin{prop}\label{prop9}
 Assume that assumptions of Theorem \ref{thm1} be satisfied. Then, for any finite-dimensional subspace $Y$ of $Y_0$, $\sup J(\mathbb{E}_Y)<+\infty$, and there is $R_Y>0$ such that
 $$J(u)<\inf J(B_{\delta}),\ \text{for all}\ \ u\in E_Y\ \ \text{with}\ \Vert u\Vert\geq R_Y.$$
\end{prop}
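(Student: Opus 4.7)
The plan is to prove the single stronger statement that $J(u)\to -\infty$ as $\Vert u\Vert\to\infty$ along $u\in\mathbb{E}_Y$; both conclusions of the proposition then follow at once, since $J$ is bounded on bounded subsets of $\mathbb{E}$ (by \eqref{eq994} together with Lemma \ref{lem1}~(10) and Lemma \ref{lemma5}) and $\inf J(B_\delta)$ is a fixed finite number. I argue by contradiction: assume that there exists $(u_n)\subset\mathbb{E}_Y$ with $\Vert u_n\Vert\to+\infty$ and $J(u_n)\geq -C$ for some constant $C$, and decompose $u_n=u_n^-+y_n$ with $u_n^-\in\mathbb{E}^-$ and $y_n\in Y\subset\mathbb{E}^+$.

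First I dispose of two easy scenarios by using $\Gamma\geq 0$ and $F\geq 0$, which give
\begin{equation*}
J(u_n)\;\leq\; \tfrac{1}{2}\Vert y_n\Vert^2-\tfrac{1}{2}\Vert u_n^-\Vert^2.
\end{equation*}
If $\Vert y_n\Vert$ stays bounded then $\Vert u_n^-\Vert\to\infty$ (from $\Vert u_n\Vert^2=\Vert u_n^-\Vert^2+\Vert y_n\Vert^2$), so $J(u_n)\to-\infty$; and if, after extracting, $\Vert y_n\Vert\to\infty$ and $\Vert u_n^-\Vert/\Vert y_n\Vert\to\infty$, the same upper bound also forces $J(u_n)\to-\infty$. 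Both cases contradict $J(u_n)\geq -C$, so I may henceforth assume $\Vert y_n\Vert\to\infty$ and $\Vert u_n^-\Vert/\Vert y_n\Vert$ bounded.

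Set $v_n:=y_n/\Vert y_n\Vert\in Y$ and $w_n:=u_n/\Vert y_n\Vert$. Since $Y$ is finite-dimensional and $\Vert v_n\Vert=1$, a subsequence satisfies $v_n\to v$ in $Y$ with $\Vert v\Vert=1$; meanwhile $u_n^-/\Vert y_n\Vert$, being bounded in $\mathbb{E}^-$, converges (up to a further subsequence) weakly in $\mathbb{E}$ and strongly in $L^2_{\mathrm{loc}}$ to some $w^-\in\mathbb{E}^-$ by Lemma \ref{lemma5}. Consequently $w_n\to w:=w^-+v$ a.e.\ along a further subsequence, and $w\not\equiv 0$: indeed, if $w\equiv 0$ then $v=-w^-\in\mathbb{E}^-\cap\mathbb{E}^+=\{0\}$, contradicting $\Vert v\Vert=1$. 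Hence $\Omega':=\{x:w(x)\neq 0\}$ has positive measure.

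Dividing the bound $J(u_n)\geq -C$ by $\Vert y_n\Vert^2$, dropping $\Gamma\geq 0$ and using the boundedness of $\Vert u_n^-\Vert/\Vert y_n\Vert$, I obtain that $\int_{\mathbb{R}^3}F(x,u_n)/\Vert y_n\Vert^2\,dx$ is bounded. On $\Omega'$, however, $|u_n(x)|=\Vert y_n\Vert|w_n(x)|\to+\infty$, so by $(f_2)$ (uniform in $x$ thanks to the periodicity in $(f_1)$) $F(x,u_n)/|u_n|^2\to+\infty$ while $|w_n|^2\to|w|^2>0$ there. Writing
\begin{equation*}
\int_{\mathbb{R}^3}\frac{F(x,u_n)}{\Vert y_n\Vert^2}\,dx \;\geq\; \int_{\Omega'}\frac{F(x,u_n)}{|u_n|^2}\,|w_n|^2\,dx,
\end{equation*}
Fatou's lemma forces the right-hand side to $+\infty$, a contradiction. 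The main obstacle I anticipate is ruling out cancellation between $u_n^-$ and $y_n$ that could make $w_n\to 0$ in $L^2_{\mathrm{loc}}$; the decisive idea is to normalize by $\Vert y_n\Vert$ rather than $\Vert u_n\Vert$, which guarantees the limit profile $w$ keeps the full component $v\in Y$ and is therefore nonzero on a set of positive measure.
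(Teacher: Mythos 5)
Your proof is correct, and it takes a genuinely different route from the paper's. The paper normalizes by the full norm, setting $v_n=u_n/\Vert u_n\Vert$, extracts $v^+\neq 0$ from the sign structure of $J$, and then must invoke the spectral inequality \eqref{eq991} on $Y_0$ together with the choice $\overline{\mu}=2\overline{\Lambda}$ and the lower bound $F(x,u)\geq\overline{\mu}\vert u\vert^2$ for $\vert u\vert$ large, in order to make the limiting quadratic-minus-potential expression strictly negative (inequality \eqref{eq887}) before concluding on a bounded domain. You instead dispose of the two degenerate regimes ($\Vert y_n\Vert$ bounded, or $\Vert u_n^-\Vert/\Vert y_n\Vert\to\infty$) with the elementary bound $J(u_n)\leq\tfrac12\Vert y_n\Vert^2-\tfrac12\Vert u_n^-\Vert^2$, and in the remaining regime normalize by $\Vert y_n\Vert$, which forces the a.e.\ limit profile $w=w^-+v$ to retain the unit vector $v\in Y$ and hence be nontrivial; then $(f_2)$ and Fatou finish the argument. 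This buys two things: you never use \eqref{eq991} or the spectral construction of $Y_0$ at all, so your argument applies verbatim to any finite-dimensional subspace of $\mathbb{E}^+$, and you avoid the slightly delicate passage to the limit in the indefinite quadratic form (lower semicontinuity of $\Vert v_n^-\Vert$ under weak convergence). The price is the explicit case analysis, which the paper's single normalization sidesteps. All supporting facts you rely on --- orthogonality of $\mathbb{E}^\pm$ in $\langle\cdot,\cdot\rangle$, nonnegativity of $\Gamma$ and $F$, compactness of $\mathbb{E}\hookrightarrow L^2_{\mathrm{loc}}$ from Lemma \ref{lemma5}, and boundedness of $J$ on bounded sets from \eqref{eq994} and Lemma \ref{lem1} --- are available in the paper, so the argument is complete.
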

\begin{proof}
It is sufficient to show that $J(u)\longrightarrow -\infty$   as $u\in\mathbb{E}_Y$, $\Vert u\Vert\longrightarrow +\infty$. Arguing
indirectly, assume that for some sequence $\lbrace u_n\rbrace_{n\in \mathbb{N}}$ with $\Vert u_n\Vert\longrightarrow +\infty$, there is $M>0$ such that $J(u_n)\geq -M$ for all $n\in \mathbb{N}$. Then, setting $v_n:=\frac{u_n}{\Vert u_n\Vert}$, we have $\Vert v_n\Vert=1,v_n\rightharpoonup v,v_n^-\rightharpoonup v^-$, and $v_n^+\rightarrow v^+\in Y$.\\
Using Lemma \ref{lemma8}-(1), we find that
\begin{equation}\label{eq990}
\frac{1}{2} \left(\Vert v_n^+\Vert^2-\Vert v_n^-\Vert^2\right)\geq \frac{J(u_n)}{\Vert u_n\Vert^2} \geq \frac{-M}{\Vert u_n\Vert^2}, 
\end{equation}
which gives that 
$$\frac{1}{2} \Vert v_n^-\Vert^2\leq \frac{1}{2} \Vert v_n^+\Vert^2+\frac{M}{\Vert u_n\Vert^2}.$$
Thus, $v^+\not\equiv 0.$\\
From assumption $(f_2)$, there is $r>0$ such that 
\begin{equation}\label{eq889}
 F(x,u)\geq \overline{\mu}\vert u\vert^2,\ \text{if}\ \vert u\vert \geq r.  
\end{equation}
It follows, by \eqref{eq888} and \eqref{eq991}, that
\begin{align}\label{eq887}
    \Vert v^+\Vert^2-\Vert v^-\Vert^2-\overline{\mu}\int_{\mathbb{R}^3} \vert v\vert^2dx& = \Vert v^+\Vert^2-\Vert v^-\Vert^2-\overline{\mu}\Vert v\Vert^2_2\nonumber\\
    & \leq \mu\Vert v^+\Vert^2_2-\Vert v^-\Vert^2-\overline{\mu}\Vert v^-\Vert^2_2-\overline{\mu}\Vert v^+\Vert^2_2\nonumber\\
    & \leq -\left((\overline{\mu}-\mu)\Vert v^+\Vert^2_2+\Vert v^-\Vert^2\right)\nonumber\\
    & <0.
\end{align}
Therefore, there is a bounded domain $\Omega\subset \mathbb{R}^3$ such that
\begin{equation}\label{eq886}
   \Vert v^+\Vert^2-\Vert v^-\Vert^2-\overline{\mu}\int_{\Omega} \vert v\vert^2dx <0.
\end{equation}
Using \eqref{eq889} and Lemma \eqref{lemma8}-(1), we see that
\begin{align*}
  \frac{J(u_n)}{\Vert u_n\Vert^2}&\leq   \frac{1}{2} \left(\Vert v_n^+\Vert^2-\Vert v_n^-\Vert^2\right)-\int_{\Omega}\frac{F(x,u_n)}{\Vert u_n\Vert^2}dx\\
  & \leq \frac{1}{2} \left(\Vert v_n^+\Vert^2-\Vert v_n^-\Vert^2-\overline{\mu}\int_{\mathbb{R}^3} \vert v_n\vert^2dx\right)-\int_{\Omega}\frac{F(x,u_n)-\frac{\overline{\mu}}{2}\vert v_n\vert^2}{\Vert u_n\Vert^2}dx\\
  & \leq \frac{1}{2} \left(\Vert v_n^+\Vert^2-\Vert v_n^-\Vert^2-\overline{\mu}\int_{\mathbb{R}^3} \vert v_n\vert^2dx\right)+\frac{\overline{\mu}r^2\vert \Omega\vert}{2\Vert u_n\Vert^2},
\end{align*}
where $\vert \Omega\vert$ denotes the Lebesgue measure of $\Omega$. Thus, \eqref{eq990} and \eqref{eq886}
imply that 
\begin{align*}
  0&\leq \lim\limits_{n\rightarrow +\infty} \left(\frac{1}{2}\Vert v_n^+\Vert^2-\frac{1}{2}\Vert v_n^-\Vert^2-\int_{\mathbb{R}^3}\frac{F(x,u_n)}{\Vert u_n\Vert^2}\right)\\
   & \leq \frac{1}{2} \left(\Vert v^+\Vert^2-\Vert v^-\Vert^2-\overline{\mu}\int_{\mathbb{R}^3} \vert v\vert^2dx\right)\\
   & <0,
\end{align*}
which is a contradiction.
\end{proof}

As a consequence, we have the following result.
\begin{prop}\label{prop8}
    Under the assumptions of Theorem \ref{thm1}, letting $e\in Y_0$ such that $\Vert e\Vert=1$, there exists $R_0>r>0$ such that 
    $$\sup J(\partial Q)\leq \rho,$$
    where $Q:=\lbrace u=u^-+te:\ \ u^-\in \mathbb{E}^-, t\geq 0,\ \Vert u\Vert\leq R_0\rbrace$ and $\rho$ given in Proposition \ref{prop7}.
\end{prop}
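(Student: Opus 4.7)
The set $Q$ lives inside the half-space $\mathbb{E}^- \oplus \mathbb{R}_+ e$, and its boundary (relative to this half-space) naturally decomposes into a ``flat'' piece
\[
Q_0 := \{\, u^- \in \mathbb{E}^- : \|u^-\| \leq R_0 \,\}
\qquad \text{(where } t = 0\text{)}
\]
and a ``spherical'' piece
\[
Q_1 := \{\, u^- + t e : u^- \in \mathbb{E}^-,\ t \geq 0,\ \|u^- + t e\| = R_0 \,\}.
\]
I plan to choose a single $R_0 > r$ and bound $J$ by $\rho$ on each piece separately.

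On $Q_0$, every element is of the form $u = u^- \in \mathbb{E}^-$, so $u^+ = 0$. Using the expression \eqref{J} for $J$ together with the fact that $\Gamma(u^-) \geq 0$ and $\int_{\mathbb{R}^3} F(x,u^-)\,dx \geq 0$ by Lemma \ref{lemma8}(1), I obtain
\[
J(u^-) = -\tfrac{1}{2}\|u^-\|^2 - \Gamma(u^-) - \int_{\mathbb{R}^3} F(x,u^-)\,dx \;\leq\; 0 \;<\; \rho,
\]
the last inequality coming from Proposition \ref{prop7}. Thus $Q_0$ requires nothing beyond the sign structure of the functional.

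On $Q_1$, I apply Proposition \ref{prop9} with the finite-dimensional subspace $Y = \spann\{e\} \subset Y_0$, which furnishes a radius $R_Y > 0$ such that $J(u) < \inf J(B_\delta) \leq \rho$ for all $u \in \mathbb{E}_Y$ with $\|u\| \geq R_Y$ (since the proof of that proposition actually produces $J(u) \to -\infty$). Setting
\[
R_0 := \max\{\, R_Y,\ r + 1 \,\}
\]
ensures simultaneously that $R_0 > r$ (so the linking geometry of the next section is non-degenerate, with $S_r \cap \mathbb{E}^+$ sitting strictly inside the ``hemisphere'' $Q_1$) and that $J(u) \leq \rho$ for every $u \in Q_1$. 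Combining the bounds on $Q_0$ and $Q_1$ yields $\sup J(\partial Q) \leq \rho$, as required.

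I do not anticipate any real obstacle here: the proposition is essentially a packaging step that combines the trivial negative-definiteness of $J$ on $\mathbb{E}^-$ (a consequence of $F,\Gamma \geq 0$) with the coercivity-from-above already established in Proposition \ref{prop9}. The only care needed is to make the two arguments fit together on the ``corner'' $\{u^- : \|u^-\| = R_0,\ t = 0\}$, where both pieces meet and the bound $J \leq 0 < \rho$ from $Q_0$ continues to apply.
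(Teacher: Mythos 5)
Your proposal is correct and follows exactly the route the paper intends: the paper states Proposition \ref{prop8} without proof, simply as a consequence of Proposition \ref{prop9}, and the expected argument is precisely your decomposition of $\partial Q$ into the flat part $\{t=0\}$ (where $J\leq 0<\rho$ because $J$ is negative semidefinite on $\mathbb{E}^-$ by Lemma \ref{lemma8}(1)) and the spherical part $\{\Vert u\Vert=R_0\}$ (handled by applying Proposition \ref{prop9} with $Y=\spann\{e\}$, whose proof gives $J\to-\infty$ on $\mathbb{E}_Y$). No gaps; your handling of the corner is fine.
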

%%%%%%%%%%%%%%%%%%%%%%%%%%%%%%%%%%%%%%%%%%%%
\section{ The Cerami sequence}
In this section,
we consider the boundedness of the Cerami sequence. Firstly, recall that a sequence $\lbrace u_n\rbrace_{n\in \mathbb{N}}\subset \mathbb{E}$ is a Cerami sequence at the level $c$ ($(C)_c$-sequence
for short) for the functional $J$ if $$J(u_n)\longrightarrow c\ \ \text{and}\ \ (1+\Vert u_n\Vert) J^{'}(u_n)\longrightarrow 0,\ \ \text{as}\ \ n\longrightarrow +\infty.$$

Before starting the main goal of this section, we give a crucial result on the nonlocal term $\Gamma$.
\begin{lemma}\label{lemma1}
  For any $u\in \mathbb{E}\setminus\lbrace 0\rbrace$, there exists $C > 0$ such that  
  $$\Gamma^{'}(u)u>0\ \ \text{and}\ \ \Vert \Gamma^{'}(u)\Vert_{\mathbb{E}^*}\leq C\left(\sqrt{\Gamma^{'}(u)u}+\Gamma^{'}(u)u\right).$$
\end{lemma}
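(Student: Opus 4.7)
The plan is to compute $\Gamma'(u)$ explicitly, use a weighted Cauchy--Schwarz inequality to bound $|\Gamma'(u)v|$, and then control the weighted $L^2$-norm of $v$ by exploiting the identity $\|\phi_u\|_{\mathcal{D}}^2 = 4\pi\,\Gamma'(u)u$ together with the continuous embedding $\mathcal{D}\hookrightarrow L^{\infty}(\mathbb{R}^3)$ recalled after Lemma~\ref{lem1}. Differentiating $\Gamma(u)=\frac{1}{4}\int\phi_u|u|^2\,dx$ in the real sense (so $t\in\mathbb{R}$), using $|u+tv|^2=|u|^2+2t\,\mathrm{Re}(u\bar v)+t^2|v|^2$ and the symmetry of the convolution kernel $\mathcal{K}$, one obtains
\[
\Gamma'(u)v=\int_{\mathbb{R}^3}\phi_u\,\mathrm{Re}(u\bar v)\,dx,\qquad \Gamma'(u)u=\int_{\mathbb{R}^3}\phi_u|u|^2\,dx=4\Gamma(u).
\]
Since $\mathcal{K}$ is strictly positive, for $u\not\equiv 0$ Lemma~\ref{lem1}(2) gives $\phi_u>0$ almost everywhere, so $\Gamma'(u)u>0$, which is the first assertion.

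For the norm estimate, the nonnegativity of $\phi_u$ permits a weighted Cauchy--Schwarz:
\[
|\Gamma'(u)v|\leq \int_{\mathbb{R}^3}\phi_u|u||v|\,dx\leq \bigl(\Gamma'(u)u\bigr)^{1/2}\Bigl(\int_{\mathbb{R}^3}\phi_u|v|^2\,dx\Bigr)^{1/2}.
\]
The key step is to bound the second factor by $C\sqrt{\Gamma'(u)u}\,\|v\|^2$. Testing the Bopp--Podolsky equation $-\triangle\phi_u+a^2\triangle^2\phi_u=4\pi|u|^2$ against $\phi_u$ itself in $\mathcal{D}$ yields $\|\phi_u\|_{\mathcal{D}}^2 = 4\pi\,\Gamma'(u)u$, and the continuous embedding $\mathcal{D}\hookrightarrow L^{\infty}(\mathbb{R}^3)$ then gives $\|\phi_u\|_{\infty}\leq C\sqrt{\Gamma'(u)u}$. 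Combined with Lemma~\ref{lemma5}(3) (which yields $\|v\|_2\leq C\|v\|$), this produces
\[
\int_{\mathbb{R}^3}\phi_u|v|^2\,dx\leq \|\phi_u\|_{\infty}\|v\|_2^2\leq C\sqrt{\Gamma'(u)u}\,\|v\|^2.
\]

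Substituting back yields $|\Gamma'(u)v|\leq C\bigl(\Gamma'(u)u\bigr)^{3/4}\|v\|$, whence $\|\Gamma'(u)\|_{\mathbb{E}^{\ast}}\leq C\bigl(\Gamma'(u)u\bigr)^{3/4}$. The elementary interpolation $t^{3/4}\leq t^{1/2}+t$ for all $t\geq 0$ (immediate by splitting into $t\leq 1$ and $t\geq 1$) then delivers the claimed bound $\|\Gamma'(u)\|_{\mathbb{E}^{\ast}}\leq C\bigl(\sqrt{\Gamma'(u)u}+\Gamma'(u)u\bigr)$. The only truly delicate point is controlling $\bigl(\int\phi_u|v|^2\bigr)^{1/2}$ by $\sqrt{\Gamma'(u)u}\,\|v\|$ rather than by the useless $C\|u\|^2\|v\|^2$ coming from a direct Hölder estimate together with Lemma~\ref{lem1}(6); it is exactly the $\mathcal{D}\hookrightarrow L^{\infty}$ embedding, combined with the identity $\|\phi_u\|_{\mathcal{D}}^2=4\pi\,\Gamma'(u)u$, that makes this improvement possible.
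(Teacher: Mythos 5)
Your proof is correct, and it replaces the paper's key technical step with a genuinely different one. The computation of $\Gamma'(u)$, the positivity $\Gamma'(u)u=4\Gamma(u)>0$, and the opening weighted Cauchy--Schwarz estimate $|\Gamma'(u)v|\leq\bigl(\Gamma'(u)u\bigr)^{1/2}\bigl(\int_{\mathbb{R}^3}\phi_u|v|^2\,dx\bigr)^{1/2}$ all coincide with the paper's argument. The divergence is in how the second factor is controlled. The paper invokes Ackermann's inequality for the positive-definite convolution form, $\int_{\mathbb{R}^3}\phi_u|v|^2\,dx\leq C\bigl(\int_{\mathbb{R}^3}\phi_u|u|^2\,dx\bigr)^{1/2}\bigl(\int_{\mathbb{R}^3}\phi_v|v|^2\,dx\bigr)^{1/2}$, then bounds $\int_{\mathbb{R}^3}\phi_v|v|^2\,dx\leq\frac{1}{a}\Vert v\Vert_2^4$ via the pointwise kernel bound $\mathcal{K}\leq 1/a$ from Lemma~\ref{lem1}-(10), and finishes with the embedding $\mathbb{E}\hookrightarrow L^2$. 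You instead test the Bopp--Podolsky equation against $\phi_u$ to get $\Vert\phi_u\Vert_{\mathcal{D}}^2=4\pi\,\Gamma'(u)u$ and use the continuous embedding $\mathcal{D}\hookrightarrow L^\infty(\mathbb{R}^3)$ (which is indeed recorded in Section~2) to obtain $\Vert\phi_u\Vert_\infty\leq C\sqrt{\Gamma'(u)u}$, hence $\int_{\mathbb{R}^3}\phi_u|v|^2\,dx\leq C\sqrt{\Gamma'(u)u}\,\Vert v\Vert_2^2$. Both routes land on the identical intermediate estimate $|\Gamma'(u)v|\leq C\bigl(\Gamma'(u)u\bigr)^{3/4}\Vert v\Vert$ and conclude with the same interpolation $t^{3/4}\leq t^{1/2}+t$. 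What your version buys is self-containedness: it needs no citation to Ackermann's Cauchy--Schwarz for the bilinear kernel form, only facts already stated about $\mathcal{D}$. Both arguments ultimately exploit the regularizing effect of the fourth-order Bopp--Podolsky term --- yours through $\mathcal{D}\hookrightarrow L^\infty$, the paper's through $\mathcal{K}\leq 1/a$ --- and, consistently, neither survives the Maxwell limit $a\to 0$.
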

\begin{proof}
    It's clear that $$\Gamma^{'}(u)u=4\Gamma (u)>0,\ \ \text{for all}\ u\in \mathbb{E}\setminus\lbrace 0\rbrace.$$
This shows the first inequality. So, 
it remains to prove the second inequality. Since $\Gamma$ is the unique nonlocal term in the functional $J$, from the argument in
Ackermann \cite{AK}, it yields that
\begin{align}\label{eq996}
 \int_{\mathbb{R}^3}\phi_u \vert v\vert^2dx&=\int_{\mathbb{R}^3}\left(\frac{1-e^{-\frac{\vert x\vert}{a}}
   }{\vert x\vert}*\vert u\vert ^2\right)\vert v\vert^2dx\nonumber\\
&\leq C_1\left(\int_{\mathbb{R}^3}\left(\frac{1-e^{-\frac{\vert x\vert}{a}}
   }{\vert x\vert}*\vert u\vert ^2\right)\vert u\vert^2dx\int_{\mathbb{R}^3}\left(\frac{1-e^{-\frac{\vert x\vert}{a}}
   }{\vert x\vert}*\vert v\vert ^2\right)\vert v\vert^2dx\right)^{\frac{1}{2}},
\end{align}
for all $u,v\in\mathbb{E}$ and some constant $C_1>0$.\\
It follows, by Lemma \ref{lem1}-(10), H\"older inequality and Sobolev embedding theorem, that
\begin{align}\label{eq995}
  &\int_{\mathbb{R}^3}\left(\frac{1-e^{-\frac{\vert x\vert}{a}}
   }{\vert x\vert}*\vert u\vert^2 \right)\vert uv\vert dx\nonumber\\
   & \leq \left(\int_{\mathbb{R}^3}\left(\frac{1-e^{-\frac{\vert x\vert}{a}}
   }{\vert x\vert}*\vert u\vert^2 \right)\vert u\vert^2 dx\right)^{\frac{1}{2}}\left(\int_{\mathbb{R}^3}\left(\frac{1-e^{-\frac{\vert x\vert}{a}}
   }{\vert x\vert}*\vert u\vert^2 \right)\vert v\vert^2 dx\right)^{\frac{1}{2}}\nonumber\\
   & \leq C_2 \left(\int_{\mathbb{R}^3}\left(\frac{1-e^{-\frac{\vert x\vert}{a}}
   }{\vert x\vert}*\vert u\vert^2 \right)\vert u\vert^2 dx\right)^{\frac{1}{2}}\left(\int_{\mathbb{R}^3}\left(\frac{1-e^{-\frac{\vert x\vert}{a}}
   }{\vert x\vert}*\vert u\vert^2 \right)\vert u\vert^2 dx\right)^{\frac{1}{4}}\nonumber\\
   & \quad \times \left(\int_{\mathbb{R}^3}\left(\frac{1-e^{-\frac{\vert x\vert}{a}}
   }{\vert x\vert}*\vert v\vert^2 \right)\vert v\vert^2 dx\right)^{\frac{1}{4}}\nonumber\\
   & \leq C_2 \left(\int_{\mathbb{R}^3}\left(\frac{1-e^{-\frac{\vert x\vert}{a}}
   }{\vert x\vert}*\vert u\vert^2 \right)\vert u\vert^2 dx\right)^{\frac{3}{4}}\frac{1}{a}\Vert v \Vert_2^4\nonumber\\
  & \leq C_3 \left(\int_{\mathbb{R}^3}\left(\frac{1-e^{-\frac{\vert x\vert}{a}}
   }{\vert x\vert}*\vert u\vert^2 \right)\vert u\vert^2 dx\right)^{\frac{3}{4}}\Vert v \Vert.
\end{align}
Thus, 
$$\vert \Gamma^{'}(u)v\vert\leq C_3\left(\Gamma^{'}(u)u\right)^{\frac{3}{4}}\Vert v\Vert\leq C\left(\sqrt{\Gamma^{'}(u)u}+\Gamma^{'}(u)u\right)\Vert v\Vert.$$
This implies the second inequality.
\end{proof}

\begin{prop}\label{prop2}
Assume that the assumptions of Theorem \ref{thm1} hold. If $\lbrace u_n\rbrace_{n\in\mathbb{N}}\subset \mathbb{E}$ is a $(C)_{c}$-sequence for $J$, that is
$$J(u_n)\longrightarrow c\ \ \text{and}\ \ (1+\Vert u_n\Vert) J^{'}(u_n)\longrightarrow 0,\ \ \text{as}\ \ n\longrightarrow +\infty.$$ Then, $\lbrace u_n\rbrace_{n\in\mathbb{N}}$ is bounded in $ \mathbb{E}$.
\end{prop}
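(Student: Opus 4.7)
The plan is to argue by contradiction: suppose $\|u_n\|\to+\infty$ and derive a contradiction from a concentration–compactness analysis of the normalized sequence $v_n:=u_n/\|u_n\|$.

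First I would exploit the Cerami condition through the identity
\begin{align*}
J(u_n)-\tfrac{1}{2}J'(u_n)u_n=\Gamma(u_n)+\int_{\mathbb{R}^3}\widetilde{F}(x,u_n)\,dx,
\end{align*}
which follows from the $4$-homogeneity of $\Gamma$ (i.e.\ $\Gamma'(u)u=4\Gamma(u)$, as in Lemma \ref{lemma1}). The Cerami hypothesis gives $|J'(u_n)u_n|\leq\|J'(u_n)\|_{\mathbb{E}^*}\|u_n\|=o(1)$, so the right-hand side converges to $c$. Combined with $(f_4)$ (which forces $\widetilde{F}\geq 0$ on $\{|u_n|\geq r_0\}$) and the subcritical growth \eqref{eq994} (which pointwise bounds $|\widetilde{F}|$ by $|u|^2+|u|^p$ on $\{|u_n|<r_0\}$), this provides the control needed on $\Gamma(u_n)$ and on $\int_{\{|u_n|\geq r_0\}}\widetilde{F}(x,u_n)\,dx$ to proceed.

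Next, normalize $v_n:=u_n/\|u_n\|$. Since Proposition \ref{prop7} guarantees $c\geq\rho>0$ and $u_n\in J_{c/2}$ for large $n$, Lemma \ref{lemma8}(3) gives $\|v_n^+\|\geq 1/\xi>0$. Extract a subsequence so $v_n\rightharpoonup v$ in $\mathbb{E}$, $v_n\to v$ in $L^p_{\mathrm{loc}}$ for $p\in[1,3)$, and a.e. If $v\not\equiv 0$, then on $\{v\neq 0\}$ we have $|u_n|=\|u_n\||v_n|\to+\infty$, and dividing $J(u_n)=c+o(1)$ by $\|u_n\|^2$ yields
\begin{align*}
\int_{\mathbb{R}^3}\frac{F(x,u_n)}{|u_n|^2}|v_n|^2\,dx=\tfrac{1}{2}\bigl(\|v_n^+\|^2-\|v_n^-\|^2\bigr)-\frac{\Gamma(u_n)+J(u_n)}{\|u_n\|^2},
\end{align*}
a bounded right-hand side against a left-hand side that diverges by $(f_2)$ and Fatou's lemma---contradiction.

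If $v\equiv 0$, apply a Lions-type concentration-compactness dichotomy to $|v_n|^2$. In the non-vanishing subcase there exist $y_n\in\mathbb{Z}^3$ and $R,\delta>0$ with $\int_{B_R(y_n)}|v_n|^2\,dx\geq\delta$; the $\mathbb{Z}^3$-periodicity of $V$ (by $(A_2)$) and of $f$ (by $(f_1)$), together with $\phi_{u_n(\cdot+y_n)}=\phi_{u_n}(\cdot+y_n)$ from Lemma \ref{lem1}(1), makes the translated sequence $\tilde{u}_n(\cdot):=u_n(\cdot+y_n)$ again a Cerami sequence at level $c$ with the same norm, and its normalization $\tilde{v}_n=v_n(\cdot+y_n)$ now has a nonzero weak limit, reducing to the previous case. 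In the vanishing subcase $v_n\to 0$ in $L^p(\mathbb{R}^3)$ for every $p\in(2,3)$, I would test $J'(u_n)u_n^+\to 0$ to obtain
\begin{align*}
\|v_n^+\|^2=\frac{\mathrm{Re}\,\Gamma'(u_n)u_n^+}{\|u_n\|^2}+\frac{1}{\|u_n\|^2}\mathrm{Re}\!\int_{\mathbb{R}^3} f(x,u_n)\cdot u_n^+\,dx+o(1),
\end{align*}
handle the $\Gamma'$ term via Lemma \ref{lemma1} and the bound on $\Gamma(u_n)$, and split the $f$-integral at $|u_n|=r_0$: on $\{|u_n|\leq r_0\}$ the bound \eqref{eq994} combined with the $L^p$-decay of $v_n$ kills the contribution; on $\{|u_n|>r_0\}$, $(f_4)$ gives $\||f(\cdot,u_n)|/|u_n|\|_{L^\sigma(\{|u_n|>r_0\})}\leq C$, and a Hölder inequality against $|v_n|\,|v_n^+|$, combined with the $L^p$-vanishing of $v_n$ and the $L^q$-boundedness of $v_n^+$ (for $q\in[2,3]$ by Lemma \ref{lemma5}), drives the integral to zero. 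This forces $\|v_n^+\|\to 0$, contradicting $\|v_n^+\|\geq 1/\xi$.

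The hard part will be the vanishing subcase: calibrating the Hölder exponents so that the three pieces of information---the $L^\sigma$-bound from $(f_4)$ (with only $\sigma>3/2$), the $L^p$-vanishing of $v_n$ for $p\in(2,3)$, and the embedding $\mathbb{E}\hookrightarrow L^q$ restricted to $q\in[2,3]$---interlock to force the $f$-integral to vanish. This is precisely the delicate computation that replaces the classical (AR) argument and is the place where the weaker condition $(f_4)$ must pay its way.
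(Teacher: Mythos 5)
Your overall route is genuinely different from the paper's: you run a Lions-type dichotomy on $v_n=u_n/\Vert u_n\Vert$, whereas the paper never looks at the weak limit of $v_n$ at all. It tests $J'(u_n)$ against $u_n^+-u_n^-$, divides by $\Vert u_n\Vert^2$, shows \emph{unconditionally} that $\int\frac{f(x,u_n)}{\vert u_n\vert}\vert v_n\vert\,\vert v_n^+-v_n^-\vert\,dx\leq\varepsilon$ for large $n$ via a three-region split of $\mathbb{R}^3$ according to the size of $\vert u_n\vert$, and shows $\Gamma'(u_n)(u_n^+-u_n^-)/\Vert u_n\Vert^2\to 0$ from Lemma \ref{lemma1} and $\Gamma(u_n)\leq C_0$; the contradiction is that the sum of these two quantities must tend to $1$. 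Your cases $v\not\equiv 0$ (Fatou with $(f_2)$) and non-vanishing with $v=0$ (translation) are sound. The genuine gap is in your vanishing subcase, on the set where $\vert u_n\vert$ stays bounded. A two-way split at $\vert u_n\vert=r_0$ does not close: on $\{\vert u_n\vert\leq r_0\}$, \eqref{eq994} gives $\vert f(x,u_n)\vert\leq\bigl(\varepsilon+C_\varepsilon r_0^{p-2}\bigr)\vert u_n\vert$, hence $\frac{1}{\Vert u_n\Vert^2}\int\vert f(x,u_n)\vert\,\vert u_n^+\vert\,dx\leq\bigl(\varepsilon+C_\varepsilon r_0^{p-2}\bigr)\Vert v_n\Vert_2\Vert v_n^+\Vert_2$, and the $C_\varepsilon r_0^{p-2}$ piece is an $O(1)$ constant that does not shrink with $n$. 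The $L^p$-decay of $v_n$ for $p\in(2,3)$ cannot absorb it: any H\"older pairing that places $v_n$ in $L^p$ with $p>2$ forces $v_n^+$ into $L^{p'}$ with $p'<2$, outside the range of Lemma \ref{lemma5}. What the paper uses instead is a \emph{three}-way split: $(f_3)$ handles $\{\vert u_n\vert\leq a_\varepsilon\}$ with a pure $\varepsilon\vert u\vert$ bound (no $C_\varepsilon\vert u\vert^{p-1}$ term), and on the annulus $\{a_\varepsilon\leq\vert u_n\vert<b_\varepsilon\}$ the uniform bound $\int_{\mathbb{R}^3}\widetilde{F}(x,u_n)\,dx\leq C_0$ together with $\widetilde{F}(x,u_n)\geq c_{a_\varepsilon}^{b_\varepsilon}\vert u_n\vert^2$ yields $\int_{A_n(a_\varepsilon,b_\varepsilon)}\vert v_n\vert^2\,dx\leq C/\Vert u_n\Vert^2\to 0$. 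That device, extracted from $(f_4)$ via the quantity $J(u_n)-\frac{1}{2}J'(u_n)u_n$, is the ingredient your proposal is missing; the $L^p$-vanishing of $v_n$ is not a substitute for it.

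A second, smaller problem: your final contradiction rests on $\Vert v_n^+\Vert\geq 1/\xi$, which you derive from Lemma \ref{lemma8}(3) under the claim $c\geq\rho>0$. The proposition is stated for an arbitrary level $c$, and the paper invokes it for a $(C)_0$-sequence in the proof of Proposition \ref{prop1}(2), so $c>0$ cannot be assumed. The repair is the one the paper uses: test $J'(u_n)$ against $u_n^+-u_n^-$ rather than $u_n^+$ alone, so that the quadratic part produces $\Vert v_n\Vert^2=1$ and the contradiction becomes $1\to 0$, with no lower bound on $\Vert v_n^+\Vert$ required. Finally, the exponent calibration you flag as the hard part is indeed delicate, but note that it is equally delicate in the source: the paper's claim that $2\sigma'\in(2,2^*)$ for $\sigma>\frac{3}{2}$ implicitly takes $2^*=6$ rather than the critical exponent $3$ of $H^{\frac{1}{2}}(\mathbb{R}^3,\mathbb{C}^4)$, so this difficulty is not one your argument introduces.
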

\begin{proof}
Let $\lbrace u_n\rbrace_{n\in\mathbb{N}}\subset \mathbb{E}$ be a $(C)_{c}$-sequence for $J$, that is
$$J(u_n)\longrightarrow c\ \ \text{and}\ \ (1+\Vert u_n\Vert) J^{'}(u_n)\longrightarrow 0,\ \ \text{as}\ \ n\longrightarrow +\infty.$$
Then,
\begin{equation}\label{eq0001}
J(u_n)\longrightarrow c \ \ \text{and}\ \ \  J^{'}(u_n)u_n\longrightarrow 0,\ \ \text{as}\ \ n\longrightarrow +\infty.
\end{equation}
By \eqref{eq0001} and assumption $(g_2)$, for $n$ sufficiently large, there exists $C_0>0$ such that
\begin{align}\label{eq0002}
C_0&\geq J(u_n)-\frac{1}{2}J^{'}(u_n)u_n =\Gamma(u_n) + \int_{\mathbb{R}^3}\widetilde{F}_+(x,u_n)dx\nonumber\\
& \geq \int_{\mathbb{R}^3}\widetilde{F}_+(x,u_n)dx.
\end{align}
Arguing by contradiction, we assume that $\Vert u_n\Vert\longrightarrow+\infty$, then $\Vert u_n\Vert \geq 1$ for $n$ is large enough. Setting $\displaystyle{v_n:=\frac{u_n}{\Vert u_n\Vert}\in \mathbb{E}}$, then   $\Vert v_n\Vert=1$ and 
\begin{equation}\label{2eq99}
    \Vert v_n\Vert_{r}\leq C_r\Vert v_n\Vert=C_r,\ \ \text{for all}\  r\in [2,2^*).
\end{equation}
Moreover, up to a subsequence, we can assume that
$$v_n\rightharpoonup v\ \text{in}\ \mathbb{R}\ \text{and}\ \ v_n(x)\rightarrow v(x)\ \text{a.e.}\ x\in \mathbb{R}^3.$$
On the other hand, for $n$ large enough, we observe that
\begin{align}\label{eq0003}
 J^{'}(u_n)(u_n^+-u_n^-)& =\Vert u_n\Vert^2-\Gamma^{'}(u_n)(u_n^+-u_n^-)-\int_{\mathbb{R}^3}f(x,u_n)(u_n^+-u_n^-)dx\nonumber\\
 & = \Vert u_n\Vert^2\left(1-\frac{\Gamma^{'}(u_n)(u_n^+-u_n^-)}{\Vert u_n\Vert^2}-\int_{\mathbb{R}^3}\frac{f(x,u_n)(u_n^+-u_n^-)}{\Vert u_n\Vert^2}dx\right)\nonumber\\
 & = \Vert u_n\Vert^2\left(1-\frac{\Gamma^{'}(u_n)(u_n^+-u_n^-)}{\Vert u_n\Vert^2}-\int_{\mathbb{R}^3}\frac{f(x,u_n)}{\vert u_n\vert}\vert v_n\vert( v_n^+-v_n^-)dx\right), 
\end{align}
which is equivalent to
\begin{align}\label{eq0004}
\frac{ J^{'}(u_n)(u_n^+-u_n^-)}{\Vert u_n\Vert^{2}}=1-\frac{\Gamma^{'}(u_n)(u_n^+-u_n^-)}{\Vert u_n\Vert^2}-\int_{\mathbb{R}^3}\frac{f(x,u_n)}{\vert u_n\vert}\vert v_n\vert( v_n^+-v_n^-)dx.
\end{align}
It follows, by \eqref{eq0001}, that
\begin{equation}\label{eq0005}
\lim\limits_{n\rightarrow+\infty}\left(\int_{\mathbb{R}^3}\frac{f(x,u_n)}{\vert u_n\vert}\vert v_n\vert( v_n^+-v_n^-)dx+\frac{\Gamma^{'}(u_n)(u_n^+-u_n^-)}{\Vert u_n\Vert^2}\right)= 1.
\end{equation}
Now,  we set for $r\geq 0$
$$\mathfrak{F}(r):=\inf\left\lbrace \widetilde{F}(x,s):\ x\in\mathbb{R}^3\ \text{and}\ s\in \mathbb{C}^4\ \text{with}\  \vert s\vert \geq r\right\rbrace. $$
By $(f_4)$, we have $$\mathfrak{F}(r)>0,\ \text{for all}\ r\ \text{large, and}\ \mathfrak{F}(r)\rightarrow +\infty\ \text{as}\ r\rightarrow+\infty.$$
For $0\leq a<b\leq +\infty$ we define
$$A_n(a,b):=\left\lbrace x\in \mathbb{R}^3:\ a\leq \vert u_n(x)\vert <b\right\rbrace $$
and
$$c_a^b:=\inf\left\lbrace \frac{\widetilde{F}(x,s)}{\vert s\vert^{2}}:\ x\in\mathbb{R}^3\ \text{and}\ s\in \mathbb{C}^4\setminus\lbrace 0\rbrace\ \text{with}\ a\leq \vert s\vert<b\right\rbrace.$$
Note that
\begin{equation}\label{eq0006}
\widetilde{F}(x,u_n)\geq c_a^b\vert u_n\vert^{2},\ \ \text{for all}\ x\in A_n(a,b).
\end{equation}
It follows, by \eqref{eq0002}, that
\begin{align}\label{eq0007}
C_0 & \geq \int_{\mathbb{R}^3}\widetilde{F}(x,u_n)dx\nonumber\\
& =\int_{A_n(0,a)}\widetilde{F}(x,u_n)dx+\int_{A_n(a,b)}\widetilde{F}(x,u_n)dx+\int_{A_n(b,+\infty)}\widetilde{F}(x,u_n)dx\nonumber\\
& \geq\int_{A_n(0,a)}\widetilde{F}(x,u_n)dx+c_a^b\int_{A_n(a,b)}\vert u_n\vert^{2}dx+\mathfrak{F}(b)\vert A_n(b,+\infty)\vert
\end{align}
for $n$ large enough.\\

Let $0<\varepsilon<\frac{1}{3}$. By assumption $(f_3)$, there exists $a_\varepsilon>0$ such that
\begin{equation}\label{eq0008}
\vert f(x,s)\vert \leq \frac{\varepsilon}{3C_r}\vert s\vert,\ \ \text{for all}\ \ \vert s\vert \leq a_\varepsilon.
\end{equation}
where $C_r$ is defined in \eqref{2eq99}.
From \eqref{eq0008} and \eqref{2eq99}, we obtain
\begin{align}\label{eq0009}
\left \vert\int_{A_n(0,a_\varepsilon)}\frac{f(x,u_n)}{\vert u_n\vert}\vert v_n\vert( v_n^+-v_n^-)dx\right\vert& \leq \int_{A_n(0,a_\varepsilon)}\frac{\vert f(x,u_n)\vert}{\vert u_n\vert}\vert v_n\vert\vert v_n^+-v_n^-\vert dx\nonumber\\
& \leq \frac{\varepsilon}{3C_2}\int_{A_n(0,a_\varepsilon)}\vert v_n\vert\vert v_n^+-v_n^-\vert dx\nonumber\\
& \leq \frac{\varepsilon}{3C_2}\int_{A_n(0,a_\varepsilon)}\vert v_n\vert^{2}dx\nonumber\\
& \leq \frac{\varepsilon}{3C_2}C_2\Vert v_n\Vert^{2}\nonumber\\
& = \frac{\varepsilon}{3},\ \ \text{for all}\ n\in \mathbb{N}.
\end{align}
Now, exploiting $(\ref{eq0007})$ and assumption $(f_4)$, we see that
$$C^{'}\geq \int_{A_n(b,+\infty)}\widetilde{F}(x,u_n)dx\geq \mathfrak{F}(b)\left\vert A_n(b,+\infty)\right\vert,$$
where $C^{'}>0$. It follows, using the fact $\mathfrak{F}(b)\rightarrow +\infty $ as $b\rightarrow+\infty$, that
\begin{equation}\label{eq00010}
\vert A_n(b,+\infty)\vert\rightarrow 0,\ \ \text{as}\ b\rightarrow+\infty,\ \ \text{uniformly in}\ n.
\end{equation}
Setting $\displaystyle{\sigma^{'}:=\frac{\sigma}{\sigma-1}}$ ( where $\sigma$ is defined in $(f_4)$ ). Since $\sigma>\frac{3}{2}$, one can check that $2\sigma^{'}\in(2,2^*)$.
Now, let $\tau\in (2\sigma^{'},2^*)$. Using \eqref{2eq99}, the H\"older inequality and \eqref{eq00010}, for $b$ large, we find that
 \begin{align}\label{eq00020}
  \left( \int_{A_n(b,+\infty)}\vert v_n\vert ^{2\sigma^{'}}dx\right)^{\frac{1}{\sigma^{'}}}
 & \leq \vert A_n(b,+\infty)\vert ^{\frac{\tau-2\sigma^{'}}{\tau\sigma^{'}}}\left( \int_{A_n(b,+\infty)}\vert v_n\vert ^{2\sigma^{'}\frac{\tau}{2\sigma^{'}}}dx\right)^{\frac{2}{\tau}} \nonumber\\
 & \leq \vert A_n(b,+\infty)\vert ^{\frac{\tau-2\sigma^{'}}{\tau\sigma^{'}}}\left( \int_{A_n(b,+\infty)}\vert v_n\vert ^{\tau}dx\right)^{\frac{2}{\tau}} \nonumber\\
 & \leq \vert A_n(b,+\infty)\vert ^{\frac{\tau-2\sigma^{'}}{\tau\sigma^{'}}}C_{\tau}\Vert v_n\Vert^{2}\nonumber\\
  & = \vert A_n(b,+\infty)\vert ^{\frac{\tau-2\sigma^{'}}{\tau\sigma^{'}}}C_{\tau}\nonumber\\
  & \leq \frac{\varepsilon}{3},\ \ \text{uniformly in}\ n.
 \end{align}
By $(f_4)$, H\"older inequality, \eqref{eq0002} and \eqref{eq00020}, we can choose $b_\varepsilon\geq r_0$ large so that
 \begin{align}\label{eq00011}
&\left\vert\int_{A_n(b_\varepsilon,+\infty)}\frac{f(x,u_n)}{\vert u_n\vert}\vert v_n\vert( v_n^+-v_n^-)dx\right\vert\nonumber\\
& \leq\int_{A_n(b_\varepsilon,+\infty)}\frac{\vert f(x,u_n)\vert}{\vert u_n\vert}\vert v_n\vert\vert v_n^+-v_n^-\vert dx\nonumber\\
 & \leq \left( \int_{A_n(b_\varepsilon,+\infty)}\left\lvert\frac{f(x,u_n)}{\vert u_n\vert}\right\rvert^{\sigma}dx\right)^{\frac{1}{\sigma}} \left( \int_{A_n(b_\varepsilon,+\infty)}(\vert v_n\vert\vert v_n^+-v_n^-\vert)^{2\sigma^{'}}dx\right)^{\frac{1}{\sigma^{'}}} \nonumber\\
 & \leq \left( \widetilde{C}\int_{A_n(b_\varepsilon,+\infty)}\widetilde{F}(x,u_n)dx\right)^{\frac{1}{\sigma}} \left( \int_{A_n(b_\varepsilon,+\infty)}\vert v_n\vert ^{2\sigma^{'}}dx\right)^{\frac{1}{\sigma^{'}}} \nonumber\\
  & \leq \frac{\varepsilon}{3},\ \ \text{uniformly in}\ n.
 \end{align}
 Next, from \eqref{eq0007}, we have
 \begin{align}\label{eq00014}
 \int_{A_n(a_{\varepsilon},b_{\varepsilon})}\vert v_n\vert^{2} dx& =\frac{1}{\Vert u_n\Vert^{2}}\int_{A_n(a_{\varepsilon},b_{\varepsilon})}\vert u_n\vert^{2} dx\nonumber\\
 &\leq \frac{C^{''}}{c_{a_{\varepsilon}}^{b_{\varepsilon}} \Vert u_n\Vert^{2}}\longrightarrow 0\ \ \text{as}\  n\longrightarrow +\infty,
 \end{align}
 where $C^{''}$ is a positive constant independent from $n$.\\
 Since $\displaystyle{\frac{f(x,s)}{ \vert s\vert}}$ is a continuous function on $a_{\varepsilon}\leq \vert s\vert\leq b_{\varepsilon}$, there exists $C>0$ depend on $a_{\varepsilon}$ and $b_{\varepsilon}$ and independent from $n$, such that
 \begin{equation}\label{eq00012}
 \vert f(x,u_n)\vert \leq C\vert u_n\vert,\ \ \text{for all}\ x\in A_n(a_{\varepsilon},b_{\varepsilon}).
 \end{equation}
 Using \eqref{eq00014} and \eqref{eq00012}, we can find $n_0>0$ 
 such that
 \begin{align}\label{eq00013}
\left\vert\int_{A_n(a_\varepsilon,b_\varepsilon)}\frac{f(x,u_n)}{\vert u_n\vert}\vert v_n\vert( v_n^+-v_n^-) dx\right\vert & \leq\int_{A_n(a_\varepsilon,b_\varepsilon)}\frac{f(x,u_n)}{\vert u_n\vert}\vert v_n\vert\vert v_n^+-v_n^-\vert\nonumber\\
 & \leq C\int_{A_n(a_\varepsilon,b_\varepsilon)}\vert v_n\vert^{2} dx\nonumber\\
 & \leq C\frac{C^{''}}{c_{a_\varepsilon}^{b_\varepsilon} \Vert u_n\Vert^{2}}\nonumber\\
 & \leq \frac{\varepsilon}{3},\ \ \text{for all}\ n\geq n_0.
  \end{align}
  Putting together \eqref{eq0009}, \eqref{eq00011} and \eqref{eq00013}, we obtain
  $$\int_{\Omega}\frac{f(x,u_n)}{\vert u_n\vert}\vert v_n\vert( v_n^+-v_n^-)dx\leq \varepsilon,\ \ \text{for all}\ n\geq n_0.$$
 It follows, by \eqref{eq0005}, that 
 \begin{equation}\label{eq998}
   \lim\limits_{n\rightarrow +\infty}\frac{\Gamma^{'}(u_n)(u_n^+-u_n^-)}{\Vert u_n\Vert^2}=1.  
 \end{equation}
 On the other side, from \eqref{eq0002} for the nonlocal term, we easily show that
  \begin{equation}\label{eq999}
   \lim\limits_{n\rightarrow +\infty}\frac{\Gamma(u_n)}{\Vert u_n\Vert^2}=0.  
 \end{equation}
Moreover, by Lemma \ref{lemma1}, we have
\begin{align}\label{eq997}
  \left \vert \frac{\Gamma^{'}(u_n)(u_n^+-u_n^-)}{\Vert u_n\Vert^2}\right\vert& \leq  \frac{\Vert \Gamma^{'}(u_n)\Vert_{\mathbb{E}^*}\Vert u_n^+-u_n^-\Vert }{\Vert u_n\Vert^2}\nonumber\\
  & \leq C_3\left\vert \frac{\left(\sqrt{\Gamma^{'}(u_n)u_n}+\Gamma^{'}(u_n)u_n\right)\Vert u_n^+-u_n^-\Vert }{\Vert u_n\Vert^2}\right\vert\nonumber\\
  & \leq C_4\left\vert \frac{\sqrt{\Gamma^{'}(u_n)u_n}+\Gamma^{'}(u_n)u_n}{\Vert u_n\Vert}\right\vert\nonumber\\
  & = C_4\left(\frac{1}{\sqrt{\Vert u_n\Vert}}\sqrt{\frac{4\Gamma(u_n)}{\Vert u_n\Vert}}+\frac{4\Gamma(u_n)}{\Vert u_n\Vert}\right)\longrightarrow 0,\ \ \text{as}\ n\longrightarrow +\infty.
\end{align}
Thus,
$$\lim\limits_{n\rightarrow +\infty}\frac{\Gamma^{'}(u_n)(u_n^+-u_n^-)}{\Vert u_n\Vert^2}=0,$$
which contradicts \eqref{eq998}. Therefore, $\lbrace u_n\rbrace_{n\in\mathbb{N}}$ is bounded in $\mathbb{E}$. This ends the proof.
\end{proof}
Let $\lbrace u_n\rbrace_{n\in\mathbb{N}}\subset\mathbb{E}$ be a $(C)_c$-sequence of the functional $J$, by the previous proposition, the sequence $\lbrace u_n\rbrace_{n\in\mathbb{N}}$ is bounded in $\mathbb{E}$. Since  $\mathbb{E}$ is a reflexive space, up to a subsequence, we may find $u\in \mathbb{E}$ such that
$$u_n\rightharpoonup u,\ \text{in}\ \mathbb{E},$$
$$u_n\rightarrow u,\ \text{in}\ L^{r}_{\text{loc}}\ \text{for all}\ r\in (1,3),$$
and
$$u_n(x)\rightarrow u(x),\ \text{a.a.}\ x\in \mathbb{R}^3.$$
Moreover, it's clear that this $u$ is a critical point of $J$. Setting $w_n:=u_n-u$, then 
$$w_n\rightarrow 0\ \text{in}\ \mathbb{E}.$$
Arguing as in \cite[Lemma 3.7]{Zh} and \cite{D1}, we can prove the following results on the sequence $w_n$.
\begin{lemma}\label{lemma2}
    Under the assumptions of Theorem \ref{thm1}, we have 
    \begin{enumerate}
        \item[$(1)$] $\displaystyle{\lim\limits_{n\rightarrow +\infty}\int_{\mathbb{R}^3}\left(F(x,u_n)-F(x,u)-F(x,w_n)\right)dx=0};$ 
        \item[$(2)$] $\displaystyle{\lim\limits_{n\rightarrow +\infty}\int_{\mathbb{R}^3}\left(f(x,u_n)v-f(x,u)v-f(x,w_n)v\right)dx=0}$, for all $v\in \mathbb{E}$;
        \item[$(3)$] $\displaystyle{\lim\limits_{n\rightarrow +\infty}\left(\Gamma(u_n)-\Gamma(u)-\Gamma(w_n)\right)}$;
        \item[$(4)$] $\displaystyle{\lim\limits_{n\rightarrow +\infty}\left(\Gamma^{'}(u_n)v-\Gamma^{'}(u)v-\Gamma^{'}(w_n)v\right)}$, for all $v\in \mathbb{E}$.
    \end{enumerate}
\end{lemma}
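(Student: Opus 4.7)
The plan is to establish each item by a Brezis--Lieb type decomposition, splitting the argument into the local reaction terms $(1)$--$(2)$ involving $F$ and $f$, and the nonlocal Coulomb-type terms $(3)$--$(4)$ involving $\Gamma$. In all four cases the setup is the same: Proposition \ref{prop2} gives boundedness of $\{u_n\}$ in $\mathbb{E}$, so by Lemma \ref{lemma5}(1) the sequences $\{u_n\}$ and $\{w_n\}$ are bounded in $L^r(\mathbb{R}^3,\mathbb{C}^4)$ for every $r\in[2,3]$; up to a subsequence $u_n\to u$ and $w_n\to 0$ a.e., and $w_n\to 0$ in $L^r_{\rm loc}$ for every $r\in[1,3)$ by Lemma \ref{lemma5}(2).

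For $(1)$ and $(2)$, I would rewrite the increment of $F$ via the fundamental theorem of calculus,
\[
F(x,u+w_n)-F(x,w_n)=\int_0^1 \mathrm{Re}\bigl(f(x,w_n+tu)\cdot\overline{u}\bigr)\,dt,
\]
so that $(1)$ reduces to showing $\int_0^1\!\!\int_{\mathbb{R}^3}\mathrm{Re}\bigl((f(x,w_n+tu)-f(x,tu))\cdot\overline{u}\bigr)\,dx\,dt\to 0$. Pointwise convergence to zero is immediate from the continuity of $f$ and $w_n\to 0$ a.e. To upgrade this to $L^1$ convergence I would use the growth bound $|f(x,s)|\leq\varepsilon|s|+C_\varepsilon|s|^{p-1}$ from \eqref{eq994} with $p\in(2,3)$, Young's inequality to dominate the cross products by $\varepsilon(|u|^2+|w_n|^2)+C_\varepsilon(|u|^p+|w_n|^p)$, and the Vitali convergence theorem: the $\varepsilon$-part is arbitrarily small uniformly in $n$, while the $p$-part is uniformly integrable because $\|w_n\|_p$ is uniformly bounded and $w_n\to 0$ strongly in $L^p(B_R)$ for every ball. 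Part $(2)$ follows by the same mechanism applied to the integrand $[f(x,u_n)-f(x,u)-f(x,w_n)]\cdot\overline{v}$: by density it suffices to check convergence for $v\in C_c^\infty(\mathbb{R}^3,\mathbb{C}^4)$, and on compact supports the strong local convergence of $w_n$ closes the estimate.

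For $(3)$ and $(4)$ I would exploit the linear dependence of $\phi_u$ on $|u|^2$. Writing $|u_n|^2=|u|^2+2\mathrm{Re}(u\cdot\overline{w_n})+|w_n|^2$ and correspondingly $\phi_{u_n}=\phi_u+2\,\mathcal{K}\ast\mathrm{Re}(u\cdot\overline{w_n})+\phi_{w_n}$, direct expansion of $\int\phi_{u_n}|u_n|^2\,dx$ yields a sum of nine terms, of which the diagonal contributions $\int\phi_u|u|^2\,dx$ and $\int\phi_{w_n}|w_n|^2\,dx$ reproduce $4\Gamma(u)$ and $4\Gamma(w_n)$, leaving seven cross terms that each contain at least one factor of $w_n$. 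Linear-in-$w_n$ pieces such as $\int\phi_u\,\mathrm{Re}(u\cdot\overline{w_n})\,dx$ vanish because $\phi_u u\in L^2(\mathbb{R}^3)$ by Lemma \ref{lem1}(3) and $w_n\rightharpoonup 0$ in $L^2$. Pieces of the form $\int\phi_u|w_n|^2\,dx$ and its symmetric counterpart $\int\phi_{w_n}|u|^2\,dx$ (which coincide because $\mathcal{K}$ is even) vanish by splitting $\mathbb{R}^3=B_R\cup B_R^c$ and using $\phi_u\in C_0(\mathbb{R}^3)$ together with local compactness of $|w_n|^2$. The fully mixed term $\int\bigl(\mathcal{K}\ast\mathrm{Re}(u\cdot\overline{w_n})\bigr)\mathrm{Re}(u\cdot\overline{w_n})\,dx$ is handled by noting that $\mathcal{K}\leq|x|^{-1}$ pointwise, so $\mathcal{K}\ast\colon L^{6/5}\to L^6$ is bounded, and $u\overline{w_n}\rightharpoonup 0$ in $L^{6/5}$ with the bilinear form being weakly sequentially continuous in each argument. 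This gives $(3)$; part $(4)$ is the analogous expansion of the quadrilinear form $\Gamma'(u)v=\int(\mathcal{K}\ast|u|^2)\mathrm{Re}(u\cdot\overline{v})\,dx$ against a fixed $v\in\mathbb{E}$, with the continuity bound in Lemma \ref{lemma1} controlling the passage to the limit on each cross term.

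The main obstacle I anticipate is controlling the genuinely nonlocal mixed terms in $(3)$--$(4)$: the truncation arguments that suffice for $(1)$--$(2)$ cannot directly isolate behaviour at infinity, since the kernel $\mathcal{K}$ has global reach. The crucial inputs are that $\mathcal{K}$ is bounded (unlike the pure Coulomb kernel) and that $\phi_u\in L^\infty\cap C_0$ by Lemma \ref{lem1}, which together convert each nonlocal cross term into a product where one factor admits a tightness argument and the other tends weakly to zero. Once this conversion is made the remaining bookkeeping is routine bilinear algebra, in the same spirit as \cite[Lemma 3.7]{Zh} for the Maxwell--Dirac setting.
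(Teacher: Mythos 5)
The paper does not actually prove this lemma: it is stated with a pointer to \cite[Lemma 3.7]{Zh} and \cite{D1}, and your Brezis--Lieb-type decomposition (fundamental theorem of calculus for the local terms $F$ and $f$, expansion of the quartic nonlocal form for $\Gamma$) is precisely the argument those references carry out, so in substance you are reconstructing the intended proof rather than taking a different route. The overall architecture --- boundedness from Proposition \ref{prop2}, the growth bounds \eqref{eq994}, tightness supplied by the fixed factors $u$ or $v$, and local compactness from Lemma \ref{lemma5}-(2) --- is the right one and covers all four items.

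Two steps are not correct as literally written, though both are repairable with tools you already invoke. First, in $(1)$, after bounding $|f(x,w_n+tu)-f(x,tu)|\,|u|$ you apply Young's inequality to reach a dominating family containing the standalone term $C_\varepsilon|w_n|^p$; boundedness of $\Vert w_n\Vert_p$ does not make $\lbrace |w_n|^p\rbrace$ uniformly integrable (the mass may concentrate or escape to infinity), so Vitali cannot be applied to that majorant. You must keep the fixed factor $|u|$ in every term and test uniform integrability via H\"older against $\Vert u\Vert_{L^2(E)}+\Vert u\Vert_{L^p(E)}$, which is exactly what makes the family tight. Second, and more seriously, for the fully mixed term $\int(\mathcal{K}\ast g_n)g_n\,dx$ with $g_n=\mathrm{Re}(u\cdot\overline{w_n})$, weak sequential continuity of the bilinear form in each argument separately says nothing about the diagonal: $B(g_n,g_n)$ need not vanish when $g_n\rightharpoonup 0$ (the analogue of $\Vert g_n\Vert^2\not\to 0$ for a weakly null sequence). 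What saves you is that $g_n\to 0$ \emph{strongly} in $L^{6/5}$: the factor $u\in L^{12/5}$ is fixed, hence tight, while $w_n$ is bounded in $L^{12/5}$ and converges to $0$ in $L^{12/5}_{\mathrm{loc}}$ by Lemma \ref{lemma5}-(2), so $\Vert u\overline{w_n}\Vert_{6/5}\to 0$ after splitting $\mathbb{R}^3$ into $B_R$ and its complement; then Hardy--Littlewood--Sobolev (via $\mathcal{K}\le|x|^{-1}$) kills the term. With these two corrections the proof closes, and the same strong-convergence observation also streamlines the cross terms in $(4)$.
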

As a consequence,
\begin{lemma}\label{lemma3}
    Under the assumptions of Theorem \ref{thm1}, one has, as $n\longrightarrow +\infty$,
    \begin{enumerate}
        \item[$(1)$] $J(w_n)\longrightarrow c-J(u)$;
        \item[$(2)$] $J^{'}(w_n) \longrightarrow 0$.
    \end{enumerate}
\end{lemma}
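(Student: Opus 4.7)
The plan is to treat the two assertions as a Brezis--Lieb type splitting, relying directly on Lemma \ref{lemma2} for the nonlocal term $\Gamma$ and for the nonlinearity $F$ (and their derivatives), and on weak convergence $w_n\rightharpoonup 0$ to kill the quadratic cross terms. The decomposition $\mathbb{E}=\mathbb{E}^+\oplus\mathbb{E}^-$ is orthogonal for $\langle\cdot,\cdot\rangle$, and by Lemma \ref{lemma5} it is preserved under the weak topology, so $w_n\rightharpoonup 0$ forces $w_n^{\pm}\rightharpoonup 0$ and in particular $u_n^{\pm}=u^{\pm}+w_n^{\pm}$.

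For part (1), I would expand
\[
\tfrac{1}{2}\bigl(\Vert u_n^+\Vert^2-\Vert u_n^-\Vert^2\bigr)
=\tfrac{1}{2}\bigl(\Vert u^+\Vert^2-\Vert u^-\Vert^2\bigr)+\tfrac{1}{2}\bigl(\Vert w_n^+\Vert^2-\Vert w_n^-\Vert^2\bigr)+\langle u^+,w_n^+\rangle-\langle u^-,w_n^-\rangle,
\]
and the last two inner products tend to $0$ because $w_n^{\pm}\rightharpoonup 0$. Combining this with Lemma \ref{lemma2}(1) applied to $F$ and Lemma \ref{lemma2}(3) applied to $\Gamma$, one gets
\[
J(u_n)=J(u)+J(w_n)+o(1).
\]
Since $J(u_n)\to c$, this gives $J(w_n)\to c-J(u)$.

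For part (2), the analogous expansion for the derivative reads, for any $v\in\mathbb{E}$,
\[
J'(u_n)v=\langle u^+-u^-,v\rangle+\langle w_n^+-w_n^-,v\rangle-\Gamma'(u_n)v-\int_{\mathbb{R}^3}f(x,u_n)v\,dx.
\]
Using Lemma \ref{lemma2}(2) for $f$ and Lemma \ref{lemma2}(4) for $\Gamma'$, I would rewrite this as
\[
J'(u_n)v=J'(u)v+J'(w_n)v+o(1)\Vert v\Vert,
\]
where the $o(1)$ is uniform on bounded sets of $\mathbb{E}$ (this is the content of the Brezis--Lieb type statements in Lemma \ref{lemma2}, which are stated precisely so that the remainders are uniform in $v$ with $\Vert v\Vert\leq 1$). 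Since $u_n\rightharpoonup u$ and $J\in C^1(\mathbb{E},\mathbb{R})$ with $\Gamma'$ and $u\mapsto\int f(x,u)\cdot\,dx$ weakly sequentially continuous (Lemma \ref{lemma8}(2)), the limit $u$ is a critical point of $J$, so $J'(u)=0$. Combining with $J'(u_n)\to 0$ in $\mathbb{E}^*$, we conclude $J'(w_n)\to 0$ in $\mathbb{E}^*$.

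The main technical point, and the step I would treat most carefully, is ensuring that the remainder in the Brezis--Lieb decomposition of $\Gamma'$ is uniform in $v$ on the unit ball of $\mathbb{E}$; this is exactly where the estimates from Lemma \ref{lemma1} (specifically the control $\Vert\Gamma'(u)\Vert_{\mathbb{E}^*}\leq C(\sqrt{\Gamma'(u)u}+\Gamma'(u)u)$) enter, together with the pointwise a.e.\ convergence $u_n\to u$, to pass from weak convergence to a uniform Brezis--Lieb splitting. The nonlinear piece is easier since $f$ has at most polynomial growth in the subcritical range $p\in(2,3)$ from \eqref{eq994} and $\mathbb{E}\hookrightarrow L^p$ compactly on bounded sets.
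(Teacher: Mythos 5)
Your proof is correct and follows exactly the route the paper intends: the paper states this lemma without proof, simply as a consequence of Lemma \ref{lemma2}, and your argument (exact splitting of the quadratic part via weak convergence of $w_n^{\pm}$, plus the Brezis--Lieb decompositions of Lemma \ref{lemma2} for $F$, $f$, $\Gamma$, $\Gamma'$, and $J'(u)=0$ from weak sequential continuity) is the standard way to fill that in. Your remark that the $v$-uniformity of the remainder in parts (2) and (4) of Lemma \ref{lemma2} is the only delicate point is well taken, since the paper states those limits only for fixed $v$, but the cited sources do provide the uniform version.
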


Next, we set the set of nontrivial critical points of the functional $J$ as follows
$$\mathcal{J}:=\left\lbrace u\in \mathbb{E}\setminus\lbrace 0\rbrace:\ J^{'}(u)=0\right\rbrace.$$
\begin{prop}\label{prop1}
  Under the assumptions of Theorem \ref{thm1}, the following assertions
hold  
\begin{enumerate}
    \item[$(1)$] $\displaystyle{\vartheta:=\inf\lbrace \Vert u\Vert:\ u\in \mathcal{J}\rbrace>0};$
    \item[$(2)$] $\displaystyle{\theta:=\inf\lbrace J(u):\ u\in \mathcal{J}\rbrace>0}.$
\end{enumerate}
\end{prop}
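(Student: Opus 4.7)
The plan splits naturally into the two claims.

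For Part (1), I would start with $u\in\mathcal J$ and exploit $J'(u)=0$ by testing against $u^+-u^-$. Since $\mathbb E^+\perp\mathbb E^-$ in both $\mathbb E$ and $L^2$, this produces
$$
\|u\|^2 \;=\; \Gamma'(u)(u^+-u^-) + \int_{\mathbb R^3} f(x,u)(u^+-u^-)\,dx.
$$
Lemma \ref{lemma1}, combined with Lemma \ref{lem1}-(10) and Lemma \ref{lemma5}-(3), bounds the nonlocal piece by $C\bigl(\sqrt{\Gamma'(u)u}+\Gamma'(u)u\bigr)\|u\|\leq C_1(\|u\|^3+\|u\|^5)$. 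The growth estimates \eqref{eq994} together with the embedding Lemma \ref{lemma5}-(1) bound the nonlinear piece by $\varepsilon C_2\|u\|^2+C_\varepsilon C_3\|u\|^p$ for some $p\in(2,3)$. Picking $\varepsilon<(2C_2)^{-1}$, dividing by $\|u\|^2$ and sending $\|u\|\to 0$ forces a contradiction (all remaining exponents on the right-hand side are strictly positive), and hence $\|u\|\geq\vartheta>0$ for a uniform $\vartheta$.

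For Part (2), I would argue by contradiction: suppose $\{u_n\}\subset\mathcal J$ satisfies $J(u_n)\to 0$. Each $u_n$ being critical, $\{u_n\}$ is automatically a $(C)_0$-sequence, hence bounded in $\mathbb E$ by Proposition \ref{prop2}, while Part (1) ensures $\|u_n\|\geq\vartheta$. Using $J'(u_n)u_n=0$ I would recast
$$
J(u_n) \;=\; J(u_n)-\tfrac12 J'(u_n)u_n \;=\; \Gamma(u_n)+\int_{\mathbb R^3}\widetilde F(x,u_n)\,dx \;\longrightarrow\; 0.
$$
A concentration--compactness dichotomy is now applied to $|u_n|^2$. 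In the \emph{vanishing} alternative $\sup_{y\in\mathbb R^3}\int_{B_1(y)}|u_n|^2\,dx\to 0$, an $H^{1/2}$ interpolation yields $u_n\to 0$ in $L^p$ for every $p\in(2,3)$, after which the slicing argument of Proposition \ref{prop2}, refined by the sharper input that $\int\widetilde F_+(x,u_n)\,dx\to 0$, forces $\|u_n\|\to 0$, contradicting Part (1).

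In the \emph{non-vanishing} alternative, the $\mathbb Z^3$-periodicity in $(A_2)$ and $(f_1)$ allows integer translations $\tilde u_n(\cdot):=u_n(\cdot+y_n)$ with $y_n\in\mathbb Z^3$; these satisfy $\tilde u_n\in\mathcal J$, $J(\tilde u_n)\to 0$, and, up to extraction, $\tilde u_n\rightharpoonup u^*$ in $\mathbb E$ with $u^*\neq 0$ (the local compactness in Lemma \ref{lemma5}-(2) retains the concentrated mass). Weak sequential continuity of $\Gamma'$ and of $\int f$ (Lemma \ref{lemma8}-(2)) places $u^*\in\mathcal J$, so Part (1) gives $\|u^*\|\geq\vartheta$ and in particular $\Gamma(u^*)>0$ by the strict positivity of the Bopp--Podolsky kernel $\mathcal K$. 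Setting $w_n:=\tilde u_n-u^*$, Lemma \ref{lemma2} together with Lemma \ref{lemma3} delivers the splitting
$$
J(\tilde u_n) \;=\; J(u^*)+J(w_n)+o(1), \qquad J'(w_n)\to 0.
$$
The main obstacle is to prove $J(u^*)>0$, after which the argument closes by iteration: $\{w_n\}$ is a bounded $(C)_{-J(u^*)}$-sequence with $w_n\rightharpoonup 0$, so the dichotomy can be applied again to $w_n$; the procedure terminates after finitely many steps (each nontrivial profile costs $\vartheta$ in norm, while the total $L^2$-budget is fixed by boundedness), and the finitely many strictly positive profile contributions add up to $\lim J(\tilde u_n)=0$, a contradiction. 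To establish $J(u^*)>0$, I combine $\Gamma(u^*)>0$ with a lower bound on $\int\widetilde F(x,u^*)\,dx$ obtained by splitting $\mathbb R^3$ into $\{|u^*|\leq r_\eta\}$ (where $(f_3)$ gives $|\widetilde F|\leq\eta|u^*|^2$), $\{r_\eta\leq|u^*|\leq r_0\}$ (controlled by continuity and a Chebyshev estimate), and $\{|u^*|\geq r_0\}$ (on which $\widetilde F\geq 0$ by $(f_4)$), and absorbing the negative contributions into a small fraction of $\Gamma(u^*)$ for $\eta$ small.
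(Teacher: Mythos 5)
Your Part (1) is essentially the paper's argument: test $J'(u)=0$ against $u^{+}-u^{-}$, bound the nonlocal term by $C\Vert u\Vert^{4}$ (via Lemma \ref{lem1}-(10)) and the nonlinear term by $\varepsilon C\Vert u\Vert^{2}+C_{\varepsilon}\Vert u\Vert^{p}$ (via \eqref{eq994} and Lemma \ref{lemma5}), then divide by $\Vert u\Vert^{2}$ and fix $\varepsilon$ small to force a uniform lower bound on $\Vert u\Vert$. That part is correct and matches the paper.

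For Part (2) you begin exactly as the paper does (a sequence $u_{n}\in\mathcal{J}$ with $J(u_{n})\to0$ is a bounded $(C)_{0}$-sequence by Proposition \ref{prop2}, it is nonvanishing, and after integer translations $\tilde{u}_{n}\rightharpoonup u^{*}\neq0$ with $u^{*}\in\mathcal{J}$ by weak sequential continuity of $J'$) — your treatment of the vanishing alternative is in fact more explicit than the paper's, which merely asserts nonvanishing. But from the point where $u^{*}$ is obtained you take a much longer road than necessary, and its key step does not close. The paper finishes in one line: since $J(\tilde{u}_{n})=J(\tilde{u}_{n})-\frac{1}{2}J'(\tilde{u}_{n})\tilde{u}_{n}=\Gamma(\tilde{u}_{n})+\int_{\mathbb{R}^3}\widetilde{F}(x,\tilde{u}_{n})\,dx$, Fatou's lemma and weak lower semicontinuity give
\begin{equation*}
0=\lim_{n\to+\infty}J(\tilde{u}_{n})\geq \Gamma(u^{*})+\int_{\mathbb{R}^3}\widetilde{F}(x,u^{*})\,dx\geq \Gamma(u^{*})>0,
\end{equation*}
a contradiction — no Brezis--Lieb splitting via Lemmas \ref{lemma2}--\ref{lemma3}, no iteration, and no separate proof that $J(u^{*})>0$ is needed. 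Your scheme, by contrast, hinges on establishing $J(u^{*})=\Gamma(u^{*})+\int_{\mathbb{R}^3}\widetilde{F}(x,u^{*})\,dx>0$, and the proposed proof of that inequality fails: the possibly negative contribution of $\widetilde{F}$ on $\{r_{\eta}\leq|u^{*}|\leq r_{0}\}$ is controlled by Chebyshev only by $C\,r_{\eta}^{-2}\Vert u^{*}\Vert_{2}^{2}$, a quantity quadratic in $u^{*}$ whose constant blows up as $\eta\to0$, while $\Gamma(u^{*})$ is quartic in $u^{*}$; for a fixed nontrivial $u^{*}$ there is no mechanism for ``absorbing'' the former into a small fraction of the latter. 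The step (and likewise the Fatou step in the paper) really requires $\widetilde{F}\geq0$ pointwise, which the paper tacitly assumes when it writes $\widetilde{F}_{+}$ in \eqref{eq0002}; once that is granted, your splitting of $\mathbb{R}^{3}$ and the entire profile-decomposition iteration (whose termination and energy bookkeeping are themselves only sketched) become superfluous.
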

\begin{proof}
Let's prove $(1)$. Let $u\in \mathcal{J}$, it holds
\begin{align}
    J^{'}(u)(u^+-u^-)=\Vert u\Vert^{2}-\Gamma^{'}(u)(u^+-u^-)-\int_{\mathbb{R}^3}f(x,u)(u^+-u^-)dx=0.
\end{align}
It follows, by \eqref{eq994} and Lemma \ref{lem1}-10, that
$$\Vert u\Vert^2\leq C\Vert u\Vert^4+\varepsilon\Vert u\Vert^2+C_\varepsilon\Vert u\Vert^p$$
for  $p\in (2,2^*)$. Choosing $\varepsilon=\frac{1}{2}$ in the previous inequality, we obtain
$$0<\frac{1}{2}\Vert u\Vert^2\leq C\Vert u\Vert^4+C_\varepsilon\Vert u\Vert^p.$$
Thus, $\Vert u\Vert>0.$\\
$(2)$ Arguing indirectly, suppose that there is a sequence $\lbrace u_n\rbrace_{n\in \mathbb{N}}\subset \mathcal{J}$ such that $J(u_n)\longrightarrow 0$ as $n\longrightarrow +\infty.$
By the first assertion, $\Vert u_n\Vert\geq \vartheta$. Clearly, $u_n$ is a $(C)_0$-sequence
of $J$, and hence is bounded by Proposition \ref{prop2}. Moreover, $u_n$ is nonvanishing. By
the invariance under the translation of $J$, we can assume, up to a translation, that
$$u_n\longrightarrow u\ \in\mathcal{J}.$$
Using Fatou's lemma and Lemma \ref{lemma1}-(8), we infer that 
\begin{align*}
  0&=\lim\limits_{n\rightarrow +\infty} J(u_n)= \lim\limits_{n\rightarrow +\infty} \left(J(u_n)-\frac{1}{2}J^{'}(u_n)u_n\right)\\
  & = \lim\limits_{n\rightarrow +\infty}  \left(\Gamma(u_n)+\int_{\mathbb{R}^3}\widetilde{F}(x,u_n)dx\right)\\
  & \geq \Gamma(u)+\int_{\mathbb{R}^3}\widetilde{F}(x,u)dx\\
  &>0,
\end{align*}
which is a contradiction. This ends the proof.
\end{proof}

Let $[r]$ denote the integer part of $r\in\mathbb{R}$. As a consequence of Lemma \ref{lemma3} and Proposition \ref{prop1}, we
have the following result (see \cite{Rabio,Kr}).
\begin{prop}\label{prop3}
    Assume that assumptions of Theorem \ref{thm1} are fulfill, and let $\lbrace u_n\rbrace_{n\in \mathbb{N}}\subset \mathbb{E}$ be a $(C)_c$-sequence of $J$. Then, one of the following assertions holds
    \begin{enumerate}
        \item[$(1)$] $u_n\longrightarrow 0$, and hence $c=0$.
        \item[$(2)$] $c\geq \theta$ and there exist a positive integer $\ell\leq [\frac{c}{\theta}]$, points $\overline{u}_1,\cdots,\overline{u}_\ell\in \mathcal{J}$, a
subsequence denoted again by $u_n$, and sequences $\lbrace a_n^k\rbrace_{n\in \mathbb{N}}\subset\mathbb{Z}^3$, $k=1,\cdots,\ell$ such that
$$
\left\lbrace
\begin{array}{l}
 \displaystyle{\left\Vert u_n-\sum_{k=1}^{\ell} a_n^k*\overline{u}_k\right\Vert\longrightarrow 0\ \text{as}\ n\longrightarrow +\infty},\\
 \ \\
\displaystyle{\vert a_n^k-a_n^m\vert\longrightarrow +\infty, \ \text{for all}\ k\neq m\ \text{as}\ n\longrightarrow 
 +\infty},\\
  \ \\
 \text{and}\\
  \ \\
\displaystyle{ \sum_{k=1}^{\ell}J(\overline{u}_k)=0.}
\end{array}
\right.
$$
    \end{enumerate}
\end{prop}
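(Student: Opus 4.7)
My plan follows the classical splitting/concentration-compactness scheme for strongly indefinite periodic functionals, built around three ingredients: a vanishing-vs-non-vanishing dichotomy for the bounded Cerami sequence, extraction of a nontrivial critical point via a $\mathbb{Z}^3$-translation (allowed by the periodicity in $(A_2)$ and $(f_1)$), and the Brezis-Lieb type decoupling already prepared in Lemma \ref{lemma3}. The procedure is iterated until the residual Cerami sequence tends to zero, and finiteness is forced by Proposition \ref{prop1}.

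By Proposition \ref{prop2} the sequence $\{u_n\}_{n\in\mathbb{N}}$ is bounded in $\mathbb{E}$. I then invoke Lions' lemma: either
\begin{equation*}
\sup_{y\in\mathbb{R}^3}\int_{B(y,1)}|u_n|^2\,dx\longrightarrow 0,
\end{equation*}
in which case interpolation between $L^2$ and $L^3$ together with Lemma \ref{lemma5} yields $u_n\to 0$ in $L^p(\mathbb{R}^3)$ for every $p\in(2,3)$; testing $J'(u_n)(u_n^+-u_n^-)\to 0$ via \eqref{eq994}, Lemma \ref{lem1}-$(10)$ and Lemma \ref{lemma1} then forces $\|u_n\|\to 0$, so $c=0$, and this is alternative $(1)$. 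Otherwise there exist $\delta,R>0$ and $a_n^1\in\mathbb{Z}^3$ with $\int_{B(a_n^1,R)}|u_n|^2\,dx\geq\delta$. Setting $\widetilde{u}_n(x):=u_n(x+a_n^1)$, hypotheses $(A_2)$, $(f_1)$ and the translation covariance of $\phi_u$ in Lemma \ref{lem1}-$(1)$ make $\{\widetilde{u}_n\}$ again a bounded $(C)_c$-sequence for the same functional $J$. Extracting a weak limit $\widetilde{u}_n\rightharpoonup\overline{u}_1$, the compact local embedding Lemma \ref{lemma5}-$(2)$ preserves the $L^2$-mass on $B(0,R)$, so $\overline{u}_1\neq 0$. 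Weak sequential continuity of $\Gamma'$ and of the nonlinear term (Lemma \ref{lemma8}-$(2)$, backed by Lemma \ref{lem1}-$(8)$) yields $J'(\overline{u}_1)=0$, hence $\overline{u}_1\in\mathcal{J}$.

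Set now $w_n^1:=\widetilde{u}_n-\overline{u}_1$. Lemma \ref{lemma3} asserts $J(w_n^1)\to c-J(\overline{u}_1)$ and $J'(w_n^1)\to 0$, so $\{w_n^1\}$ is itself a Cerami sequence at a strictly lower level. I reapply the dichotomy to $\{w_n^1\}$: either $w_n^1\to 0$ in $\mathbb{E}$ and one stops with $\ell=1$, or one extracts a further shift $b_n^2\in\mathbb{Z}^3$ and a nontrivial critical point $\overline{u}_2\in\mathcal{J}$, and sets $a_n^2:=a_n^1+b_n^2$. Because each extracted $\overline{u}_k\in\mathcal{J}$ contributes at least $\theta>0$ to the energy by Proposition \ref{prop1}-$(2)$, the residual level drops by at least $\theta$ per step and the iteration halts after $\ell\leq[c/\theta]$ iterations; this forces $c\geq\theta$ whenever $\ell\geq 1$ and gives the claimed energy identity $\sum_{k=1}^{\ell}J(\overline{u}_k)=c$. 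The separation $|a_n^k-a_n^m|\to+\infty$ for $k\neq m$ is automatic from the construction: if along a subsequence $a_n^k-a_n^m$ remained bounded in $\mathbb{Z}^3$, it would stabilize at some $a\in\mathbb{Z}^3$ and produce a nontrivial translate of $\overline{u}_m$ inside the weak limit at step $k$, contradicting the fact that $\overline{u}_k$ was extracted from a residual in which the $m$-th bump had already been removed.

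The hardest step is justifying the Brezis-Lieb splitting used in Lemma \ref{lemma3} for the nonlocal Bopp-Podolsky term, namely the vanishing of the cross contributions $\int\phi_{\overline{u}_1}|w_n^1|^2\,dx$ and $\int\phi_{w_n^1}|\overline{u}_1|^2\,dx$ along the translated sequence. The kernel $\mathcal{K}(x)=(1-e^{-|x|/a})/|x|$ improves only the short-range behavior over the Coulomb kernel and keeps the $1/|x|$ tail, so the argument relies on the $L^r$ and $C_0$ properties in Lemma \ref{lem1}-$(3)$-$(4)$ together with a Hardy-Littlewood-Sobolev estimate to exploit that the $\mathbb{Z}^3$-shifts separate supports. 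A second delicate point is the strong indefiniteness of $J$: to drive the residual to zero in $\mathbb{E}$-norm one must test against $u_n^+-u_n^-$ rather than $u_n$ (exactly as in Proposition \ref{prop2}), and use that the $\pm$-projections commute with $\mathbb{Z}^3$-translations because the Dirac operator $A$ is $\mathbb{Z}^3$-invariant under $(A_2)$.
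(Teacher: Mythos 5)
Your proposal is correct and is essentially the intended argument: the paper offers no proof of this proposition, deferring to the standard $\mathbb{Z}^3$-splitting results of \cite{Rabio,Kr}, and your scheme (vanishing/non-vanishing dichotomy, translation-extraction of nontrivial critical points using the periodicity in $(A_2)$, $(f_1)$ and Lemma \ref{lem1}-$(1)$, Brezis--Lieb decoupling via Lemma \ref{lemma3}, and termination after at most $[c/\theta]$ steps via Proposition \ref{prop1}) is exactly that scheme. One point worth recording: you correctly arrive at $\sum_{k=1}^{\ell}J(\overline{u}_k)=c$, whereas the statement as printed reads $\sum_{k=1}^{\ell}J(\overline{u}_k)=0$; the printed version is an evident misprint, since Proposition \ref{prop1}-$(2)$ forces $\sum_{k=1}^{\ell}J(\overline{u}_k)\geq \ell\theta>0$, so your conclusion is the correct one. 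The only step you leave implicit is why the iteration must actually halt rather than merely lowering the level by $\theta$ each round: one needs that every residual Cerami level is nonnegative, which follows as in \eqref{eq0002} from $J(w_n^{j})-\tfrac12 J'(w_n^{j})w_n^{j}=\Gamma(w_n^{j})+\int_{\mathbb{R}^3}\widetilde{F}(x,w_n^{j})\,dx\geq 0$.
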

\section{Proof of our main result}
As usual, in variational problems, to prove the existence of weak solutions it is  enough to find critical points of the energy functional associated with the problem. To this end, we shall use the following abstract theorem which
is taken from \cite{D1}.\\

First of all, we state definitions and notations. Let $\mathbb{W}$ be a Banach space with direct sum $\mathbb{W}=\mathbb{X}\oplus\mathbb{Y}$ and corresponding projections
$P_{\mathbb{X}}, P_{\mathbb{Y}}$ onto $\mathbb{X}, \mathbb{Y}$. Let $\mathcal{S}\subset  \mathbb{X}^*$ be a dense subset, for each $s\in \mathcal{S}$ there
is a semi-norm on $\mathbb{W}$ defined by
$$p_s:\mathbb{W}\longrightarrow \mathbb{R},\ p_s(u):=\vert s(x)\vert+\Vert y\Vert,\ \ \text{for all}\ u=x+y\in\mathbb{W}.$$
We denote by $\mathcal{T}_{\mathcal{S}}$ the topology induced by semi-norm family $\lbrace p_s\rbrace_{s\in \mathcal{S}}$, $w^*$ denote the
$\text{weak}^*$-topology on $\mathbb{W}^*$.
For a functional $\Phi\in C^{1}(\mathbb{W},\mathbb{R})$ we write 
$$\Phi_c:=\lbrace u\in \mathbb{W}:\ \ \Phi(u)\geq c \rbrace.$$

We may suppose that 
\begin{enumerate}
    \item[$(\Phi_1)$] for any $c\in \mathbb{R}$, super level $\Phi_c$ is $\mathcal{T}_{\mathcal{S}}$-closed and $\Phi^{'}:(\Phi_c,\mathcal{T}_{\mathcal{S}})\longrightarrow (\mathbb{E}^*,w^*)$ is continuous;
    \item[$(\Phi_2)$] for any $c>0$, there exists $\xi>0$ such that $\Vert u\Vert<\xi \Vert P_{\mathbb{Y}}u\Vert$, for all $u\in \Phi_c$;
    \item[$(\Phi_3)$] there exists $r>0$ such that $\rho:=\inf \Phi(S_r\cap \mathbb{Y})>0$, where $S_r:=\lbrace u\in \mathbb{W}:\ \ \Vert u\Vert=r\rbrace$.
\end{enumerate}
\begin{thm}\label{thm3}
  Let the assumptions $(\Phi_1)-(\Phi_3)$ be satisfied and suppose that there exist $R>r>0$ and $e\in \mathbb{Y}$ with $\Vert e\Vert=1$ such that $$\sup \Phi(\partial Q)\leq \rho,$$
  where $Q:=\lbrace u=x+te:\ \ x\in \mathbb{X}, t\geq 0,\ \Vert u\Vert\leq R\rbrace$. Then, $\Phi$ has a $(C)_c$-sequence with
  $$\rho \leq c\leq \sup\Phi(Q).$$
\end{thm}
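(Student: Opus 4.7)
The plan is to prove Theorem \ref{thm3} by combining a minimax construction over a class of admissible $\mathcal{T}_{\mathcal{S}}$-continuous maps with a Cerami-type deformation lemma adapted to the strongly indefinite setting, in the spirit of Kryszewski--Szulkin and Ding. The key point throughout is that because $\mathbb{X}$ may be infinite dimensional, the usual norm topology cannot be used to build admissible deformations, and we must instead work throughout with the weaker topology $\mathcal{T}_{\mathcal{S}}$ encoded in hypotheses $(\Phi_1)$ and $(\Phi_2)$.

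I would first introduce the candidate critical value
\begin{equation*}
c := \inf_{h\in\Gamma}\sup_{u\in Q}\Phi(h(u)),
\end{equation*}
where $\Gamma$ is the class of maps $h\in C((Q,\mathcal{T}_{\mathcal{S}}),(\mathbb{W},\mathcal{T}_{\mathcal{S}}))$ of the form $h(u)=u+K(u)$ with $K$ bounded in norm, $K(Q)$ relatively $\mathcal{T}_{\mathcal{S}}$-compact, and $h|_{\partial Q}=\mathrm{id}$. Since $\mathrm{id}\in\Gamma$, one obtains $c\le\sup\Phi(Q)$ for free. The bound $c\ge\rho$ is the linking step: for every $h\in\Gamma$ one shows $h(Q)\cap(S_r\cap\mathbb{Y})\neq\emptyset$ by a Brouwer-degree argument on finite-dimensional Galerkin approximations of $\mathbb{X}$, exploiting $(\Phi_3)$ to identify $S_r\cap\mathbb{Y}$ as the linking sphere and the assumption $\sup\Phi(\partial Q)\le\rho$ to guarantee that $\partial Q$ is mapped away from $S_r\cap\mathbb{Y}$, so the degree computation is unaffected by the boundary.

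Next, I would argue by contradiction: suppose $\Phi$ admits no $(C)_c$-sequence. Then there exist $\varepsilon,\delta>0$ such that $(1+\|u\|)\|\Phi'(u)\|_{\mathbb{W}^*}\ge\delta$ on $\Phi^{-1}([c-\varepsilon,c+\varepsilon])$. Using $(\Phi_1)$ together with the density of $\mathcal{S}$ in $\mathbb{X}^*$, I would construct a locally Lipschitz pseudo-gradient vector field $V$ which is additionally $\mathcal{T}_{\mathcal{S}}$-continuous and Cerami-scaled, namely $\|V(u)\|\le (1+\|u\|)^{-1}$ and $\langle\Phi'(u),V(u)\rangle\ge\delta(1+\|u\|)^{-1}$. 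Integrating the flow of $-V$ on $Q$ up to a suitable time $T$ produces a deformation $\eta_T:Q\to\mathbb{W}$ belonging to $\Gamma$ (the $\mathcal{T}_{\mathcal{S}}$-compactness of the increment follows from the uniform norm bound on $V$ and $(\Phi_2)$, which keeps trajectories in a set where $\|P_{\mathbb{X}}u\|$ is controlled) and satisfying $\sup_Q\Phi\circ\eta_T<c$, contradicting the definition of $c$.

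The main obstacle, as in all strongly indefinite linking results, is the construction of this pseudo-gradient that is simultaneously Cerami-scaled \emph{and} $\mathcal{T}_{\mathcal{S}}$-continuous; a naive partition-of-unity construction only gives continuity in the norm topology, which is insufficient because the infinite-dimensional $\mathbb{X}$-direction prevents the resulting flow from staying inside $\Gamma$. This is precisely where $(\Phi_1)$ is invoked, to ensure that $\Phi'$ is continuous from $(\Phi_c,\mathcal{T}_{\mathcal{S}})$ into $(\mathbb{W}^*,w^*)$, while $(\Phi_2)$ provides the norm-to-$\mathbb{Y}$-projection control that keeps the trajectories in a region where $p_s$-semi-norms are coercive in the relevant sense. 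Once the vector field is available, the deformation argument and the final extraction of the $(C)_c$-sequence follow the standard strongly indefinite scheme.
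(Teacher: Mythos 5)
The paper does not prove this theorem at all: it is imported verbatim from Ding--Lee \cite{D1} (and ultimately from the Kryszewski--Szulkin generalized linking theorem \cite{Kr}), so there is no in-paper argument to compare against. Your outline is precisely the standard proof from that literature: minimax over a class of $\mathcal{T}_{\mathcal{S}}$-admissible maps fixing $\partial Q$, the intersection property $h(Q)\cap S_r\cap\mathbb{Y}\neq\emptyset$ via a degree argument, and a contradiction through a Cerami-scaled deformation. So in substance you are reproducing the cited source's approach correctly.

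Two remarks. First, the bound $c\ge\rho$ comes from the intersection property together with $(\Phi_3)$ alone; the hypothesis $\sup\Phi(\partial Q)\le\rho$ is not what keeps $h(\partial Q)$ off $S_r\cap\mathbb{Y}$ (that is automatic, since $h|_{\partial Q}=\mathrm{id}$ and $\partial Q$ is disjoint from $S_r\cap\mathbb{Y}$ for $R>r$). Its actual role is in the deformation step: it is what allows the deformed map $\eta_T\circ h$ to remain in the admissible class, because the flow can be cut off so as not to move points with $\Phi\le\rho$, hence fixes $\partial Q$; note that when $c=\rho$ this requires the usual extra care with the cut-off band. Second, the two genuinely hard ingredients --- the degree theory for $\mathcal{T}_{\mathcal{S}}$-admissible maps of the form $\mathrm{id}+K$ with $K(Q)$ precompact in a finite-dimensional sense, and the existence of a locally Lipschitz, $\mathcal{T}_{\mathcal{S}}$-continuous, Cerami-scaled pseudo-gradient --- are named in your sketch but not constructed; they constitute essentially all of the content of the proof in \cite{Kr} and \cite{D1}. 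As a proposal identifying the correct architecture and the correct use of $(\Phi_1)$--$(\Phi_3)$, it is sound; as a self-contained proof it would need those two constructions carried out.
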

\begin{proof}[\textbf{Proof of Theorem \ref{thm1}}]
  Taking $\mathbb{W}=\mathbb{E}$, $\mathbb{X}=\mathbb{E}^-$, $\mathbb{Y}=\mathbb{E}^+$, and $\Phi=J$ in the previous theorem. By Lemma \ref{lemma8} and Proposition \ref{prop7}, we
see that $(\Phi_1)-(\Phi_3)$ are satisfied. The Proposition \ref{prop8} shows that $J$ possesses the linking structure of Theorem \ref{thm3}. Therefore, there exists $\lbrace u_n\rbrace_{n\in \mathbb{N}}\subset \mathbb{E}$ a $(C)_c$-sequence of $J$ at level $c$. By Proposition \ref{prop2}, $u_n$ is bounded in $\mathbb{E}$. Let
$$\delta:=\limsup_{n\rightarrow +\infty}\sup_{y\in \mathbb{R}^3}\int_{B(y,1)}\vert u_n\vert^2dx.$$
Here, we distinguish two possibilities for $\delta$:  $\delta=0$ or $\delta>0$.\\
If $\delta=0$, by Lion's concentration compactness principle in \cite[Lemma 1.21]{th2}, we have that
$$u_n\longrightarrow 0\ \text{in}\ L^{r},\ \text{for all}\ r\in(2,3).$$
It follows, from \eqref{eq994} and Lemma \ref{lem1}-(10), that
$$\left.
\begin{array}{c}
\displaystyle{\int_{\mathbb{R}^3}f(x,u_n)u_ndx\longrightarrow 0},\\
\ \\
\displaystyle{\int_{\mathbb{R}^3}F(x,u_n)dx\longrightarrow 0},\\
\ \\
 \text{and}\\
\ \\
\displaystyle{\Gamma (u_n)\longrightarrow 0}
\end{array}
\right\rbrace\ \text{as}\ n\longrightarrow +\infty.
$$
Consequently,
\begin{align*}
  c&=\lim\limits_{n\rightarrow +\infty} J(u_n)= \lim\limits_{n\rightarrow +\infty} \left(J(u_n)-\frac{1}{2}J^{'}(u_n)u_n\right)\\
  & = \lim\limits_{n\rightarrow +\infty}  \left(\Gamma(u_n)+\int_{\mathbb{R}^3}\widetilde{F}(x,u_n)dx\right)\\
  &=0,
\end{align*}
this is a contradiction. Thus, $\delta >0$.\\
Going if necessary to a subsequence, we may assume the existence of $k_n\subset \mathbb{Z}^3$ such that
$$\int_{B(k_n,1+\sqrt{3})}\vert u_n\vert^2dx>\frac{\delta}{2}.$$
Lets define $v_n(x) := u_n(x + k_n)$. So, 
\begin{equation}\label{eq993}
\int_{B(0,1+\sqrt{3})}\vert v_n\vert^2dx>\frac{\delta}{2}.
\end{equation}
Since $J$ and $J^{'}$ are $\mathbb{Z}^3$-translation invariant, we obtain $\Vert v_n\Vert=\Vert u_n\Vert$ and 
\begin{equation}\label{eq992}
J(v_n)\longrightarrow c\ \ \text{and}\ \ (1+\Vert v_n\Vert) J^{'}(v_n)\longrightarrow 0,\ \ \text{as}\ \ n\longrightarrow +\infty.
\end{equation}
Passing to a subsequence, we have $v_n\rightharpoonup v$ in $\mathbb{E}$, $v_n\rightarrow v$ in $L^r$, for all $r\in [1, 3)$ and $v_n(x)\rightarrow v(x)$ a.e. on $\mathbb{R}^3$. Hence, it follows, from \eqref{eq993}  and \eqref{eq992}, that $J^{'}(v)=0$ and
$v\not\equiv 0$. This shows that $v\in \mathcal{J}$. Therefore, $v\in \mathbb{E}$ is a nontrivial weak solution of problem \eqref{P}. Thus,  $(v,\phi_v)$ is a pair of solutions for system \eqref{DBP}.
\end{proof}
%%%%%%%%%%%%%%%%%%%%%%%%%%%%%%%%%%%%%%%%

%%%%%%%%%%%%%%%%%%%%%%%%%%%%%%%%%%%%%%%%%%%%%
\end{document}